\newcommand{\teq}{\trianglelefteq}
\theoremstyle{plain}
\newtheorem{theorem}{Theorem}[section]
\theoremstyle{plain}
\newtheorem{lemma}[theorem]{Lemma}
\theoremstyle{plain}
 \theoremstyle{remark}
\newtheorem{remark}[theorem]{Remark}
 \theoremstyle{remark}
\theoremstyle{definition}
\newtheorem{prop}[theorem]{Proposition}
\newtheorem{cor}[theorem]{Corollary}
\theoremstyle{theorem}
\newtheorem{conjB}[theorem]{Conjecture}
\numberwithin{equation}{section}
\def\DynkinArrowLength{3mm}
\def\C{\mathbb C}
\def\F{\mathbb F}
\def\Z{\mathbb Z}
\def\cF{\mathcal F}
\def\cN{\mathcal N}
\def\cP{\mathcal P}
\def\cS{\mathcal S}
\def\rA{\mathrm A}
\def\rB{\mathrm B}
\def\rC{\mathrm C}
\def\rD{\mathrm D}
\def\rE{\mathrm E}
\def\rF{\mathrm F}
\def\rG{\mathrm G}
\def\rY{\mathrm Y}
\def\Ind{\operatorname{Ind}}
\def\Inf{\operatorname{Inf}}
\def\Irr{\operatorname{Irr}}
\def\Tr{\operatorname{Tr}}
\def\cd{\operatorname{cd}}
\def\rs{\operatorname{rs}}
\title{On the character degrees of a Sylow $p$-subgroup of a finite Chevalley group $G(p^f)$ over a bad prime}
\author{Tung Le, Kay Magaard and Alessandro Paolini}
\begin{document}

\maketitle

\begin{abstract} Let $q$ be a power of a prime $p$ and let 
$U(q)$ be a Sylow $p$-subgroup of a finite Chevalley group $G(q)$ defined over the field with $q$ elements. 
We first give a parametrization of the set $\Irr(U(q))$ of irreducible characters 
of $U(q)$ when $G(q)$ is of type $\rG_2$. This is uniform for primes $p \ge 5$, 
while the bad primes $p=2$ and $p=3$ have to be considered separately. 
We then use this result and the contribution of several authors to show a general result, namely that 
if $G(q)$ is any finite Chevalley group with $p$ a bad prime, then there 
exists a character $\chi \in \Irr(U(q))$ such that $\chi(1)=q^n/p$ for some 
$n \in \Z_{\ge_0}$. In particular, for each $G(q)$ and every bad prime $p$, we construct a family of 
characters of such degree as inflation followed 
by an induction of linear characters of an abelian subquotient $V(q)$ of $U(q)$. 
\end{abstract}

\section{Introduction}
Let $q$ be a power of a prime $p$, and let $\F_q$ denote the field 
with $q$ elements. A major research problem in the representation 
theory of finite groups is to understand the characters of a finite 
Chevalley group $G(q)$ defined over $\F_q$. Namely finite Chevalley groups contribute to a 
large part of all finite nonabelian simple groups. The study of the set $\Irr(G(q))$ of 
ordinary irreducible characters of $G(q)$ has been carried out to an extensive progress, 
starting from the groundbreaking work of Deligne and Lusztig \cite{DL76}, 
to developments which allowed to compute and process the character 
table of $G(q)$ in \cite{CHEVIE} when the rank of $G(q)$ is small. In particular, 
the set $\cd(G(q))=\{\chi(1) \mid \chi \in \Irr(G(q))\}$ 
of irreducible character degrees of $G(q)$ is essentially known. 

The situation is different when we consider 
characters of $G(q)$ over an algebraically closed field of characteristic $\ell \ne p$, the 
so called cross-characteristics case. One has considerably less amount of 
information for such characters. The problem of studying such characters is 
closely related to the one of parametrizing the ordinary irreducible characters 
of a fixed Sylow $p$-subgroup $U(q)$ of $G(q)$, which we also denote by $U(G(q))$. Namely the induction of $\psi \in \Irr(U(q))$ 
to $G(q)$ remains an $\ell$-projective character, as $\ell$ and $p$ are different. A decomposition of 
such induced character can be provided, if the behavior of $\psi$ is known 
and we have enough information about the fusion of the conjugacy classes of $U(q)$ to $G(q)$. 

Even for groups of small rank, the set $\cd(U(q))$ is more 
complicated to describe than the set $\cd(G(q))$, and is in general 
also much bigger. 
We summarize here some 
of the main known results in this direction. If $G(q)=\rA_{n-1}(q)$ for $n \ge 2$, then 
every degree of a character in $\Irr(U(q))$ is a power of $q$ \cite{Is95}, and 
in fact $\cd(U(q))=\{1, q, \dots, q^{\mu(n)}\}$, where $\mu(n)=m^2-m$ if $n=2m$ and 
$\mu(n)=m^2$ if $n=2m+1$, see \cite{Hup} and \cite{Is07}. If $G(q)=\rD_n(q)$ for $n \ge 4$ and $p$ is odd, then 
$\cd(U(q))=\{1, q, \dots, q^{f(n)}\}$ with $f(n)=n(n-1)/2-[n/2]$ \cite{Mar99}.
A similar result holds for other $G(q)$ of classical type for odd $p$; in particular, $\cd(U(q)) \subseteq \{q^n \mid n \in \mathbb{Z}_{\ge 0}\}$ 
for $G(q)$ classical if and only if $p$ is an odd prime, see \cite{Sze03} and \cite{San03}. 
Via the Kirillov orbit method, it was proved in \cite{GMR15} that 
if $p$ is at least the Coxeter number of $G(q)$ then $\cd(U(q)) \subseteq \{q^n \mid n \in \mathbb{Z}_{\ge 0}\}$ 
for $G(q)$ an arbitrary finite Chevalley group. 

When $p$ is a bad prime for $G(q)$, the set $\cd(U(q))$ is often not known. 
The goal of this work is to show that $\cd(U(q)) \subseteq \{q^n \mid n \in \mathbb{Z}_{\ge 0}\}$ does 
never occur in this case, by means of an explicit construction of a character of degree $q^n/p$ for 
some positive integer $n$. 

We first determine a parametrization of $\Irr(U(q))$ when $G(q)$ is of 
type $\rG_2$. We provide full details just for the primes $p=2$ and $p=3$; 
the result is straightforward from \cite[Algorithm 3.3]{GLMP16} if $p \ge 5$. We 
use the subsequent Lemma \ref{lem:rl} to parametrize certain characters of $\Irr(U(q))$, 
and a counting argument to see that these determine all of $\Irr(U(q))$. In particular, we 
find characters of degree $q/p$ for $p \in \{2, 3\}$. 

\begin{theorem}\label{thm:G2} Let $G(q)=\rG_2(3^f)$ or $G(q)=\rG_2(2^f)$. The irreducible characters of $U(q)$ are parametrized in Table 
\ref{tab:G2all2}. In particular, we have that 
$$|\Irr(U(q))|=\begin{cases}
q^3+2q^2-q-1, & \text{ if }p\ge5, \\
2q^3+5q^2-10q+4, & \text{ if }p=3, \\
q^3+5q^2-7q+2, & \text{ if }p=2.
\end{cases}$$
\end{theorem}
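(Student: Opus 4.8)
The plan is to realise $U = U(q)$ explicitly through the Chevalley presentation and then to build $\Irr(U)$ layer by layer along a central series, treating each layer by Clifford theory. First I would fix a Chevalley basis of the simple Lie algebra of type $\rG_2$ and write the six positive roots as $a, b, a+b, 2a+b, 3a+b, 3a+2b$ with $a$ short and $b$ long, so that $U = \prod_\alpha X_\alpha$ with root subgroups $X_\alpha = \{x_\alpha(t) : t \in \F_q\} \cong (\F_q,+)$ and $|U| = q^6$. From the Chevalley commutator formula I would record all relations $[x_\beta(u), x_\alpha(t)]$, whose structure constants for $\rG_2$ lie in $\{\pm 1, \pm 2, \pm 3\}$, and then reduce them modulo $p$: for $p = 3$ every term carrying a coefficient $\pm 3$ disappears, and for $p = 2$ every term carrying a coefficient $\pm 2$ disappears. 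This fixes the isomorphism type of $U$ in the two bad characteristics and in particular determines the center $Z := Z(U)$ (which contains the highest root subgroup $X_{3a+2b}$) and a convenient chief series $1 \teq Z \teq \cdots \teq U$ refining the filtration by root heights.

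Next I would produce $\Irr(U)$ as an explicit union of families, each family consisting of the characters $\Ind_W^U \Inf_{W/K}^W \psi$ where $K \teq W \teq U$, the subquotient $V := W/K$ is abelian, and $\psi \in \Irr(V)$ is linear; Lemma \ref{lem:rl} supplies the criterion under which such a character is irreducible and gives its degree as $[U:W]$. The linear characters form the family with $W = U$ and $K = [U,U]$: computing $[U,U]$ from the reduced relations determines their number, which for $p \in \{2,3\}$ can be larger than the generic value $q^2$ because some root subgroups leave the commutator subgroup. The remaining, nonlinear characters I would organise by their (nontrivial) restriction to $Z$ and then by finer ``support'' data along the chief series; for each stratum one chooses an abelian normal subgroup $A \teq U$ with $Z \le A$ (for instance $A = X_{2a+b}X_{3a+b}X_{3a+2b}$, or a larger one when the reduced relations allow), fixes an extension $\hat\lambda$ of the given central character to $A$, and studies the bilinear form $(x,y) \mapsto \hat\lambda([x,y])$ governing the Clifford correspondence over $\hat\lambda$. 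Here the bad-prime collapses force this form to degenerate, so its radical is strictly larger than expected and carries a nontrivial abelian quotient $V = V(q)$; inflating a linear character of $V$ and inducing to $U$ yields the extra families, among which is one with $[U:W] = q/p$ producing the characters of degree $q/p$ asserted in the introduction. For $p \ge 5$ this entire list is exactly the output of \cite[Algorithm 3.3]{GLMP16}.

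Finally I would record the resulting parametrisation in Table \ref{tab:G2all2} --- listing, for each bad prime, the linear characters and every family of nonlinear characters together with its cardinality and common degree --- and verify that the families are pairwise disjoint and exhaustive by checking the identity $\sum_{\chi \in \Irr(U)} \chi(1)^2 = |U| = q^6$ (equivalently, by matching $|\Irr(U)|$ against the number of conjugacy classes of $U$, computed from the same relations). This identity simultaneously yields the claimed formulas for $|\Irr(U(q))|$.

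The step I expect to be the main obstacle is controlling the case analysis created by the bad-prime degeneracies: since several structure constants vanish, the forms $(x,y) \mapsto \hat\lambda([x,y])$ degenerate in ways that depend on which of the ``support'' coordinates of $\hat\lambda$ vanish, so the nonlinear characters split into many families, and for each one must identify the radical and the abelian subquotient $V$ precisely and track the arithmetic of the $\F_p$-bilinear forms and additive maps $\F_q \to \F_q$ that appear (such as the Frobenius $t \mapsto t^p$). The delicate point is therefore not any individual computation but ensuring that the list of families is complete and non-overlapping, which is precisely why the global count via $\sum_\chi \chi(1)^2 = q^6$ is indispensable.
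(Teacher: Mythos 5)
Your plan is correct and follows essentially the same route as the paper: reduce the commutator relations modulo the bad prime, stratify $\Irr(U(q))$ by central characters of successive quotients (central root support), apply Lemma~\ref{lem:rl} to obtain each family by inflation--induction of linear characters of an abelian subquotient $V(q)$ (with the degree-$q/p$ families coming from the nontrivial $X'$, $Y'$), and confirm exhaustiveness via $\sum_{\chi}\chi(1)^2=q^6$, quoting \cite[Algorithm 3.3]{GLMP16} for $p\ge 5$. The only loose point is calling the map $\hat\lambda([x,y])$ a bilinear form --- for $p=3$ it is $\phi(st(a_3+a_5t^2))$, cubic in $t$ --- but Lemma~\ref{lem:rl} does not require bilinearity, so this does not affect the argument.
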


It is not difficult to produce characters of degree $q/2$ in $U(\rB_2(q))$ when $q$ is a power of $2$, which we 
can inflate to $U(\rB_n(q))$, $U(\rC_n(q))$ and $U(\rF_4(q))$. We could similarly 
inflate characters of $U(\rD_4(2^f))$ of degree $q^3/2$, obtained in \cite{HLM11}, 
to $U(\rD_n(2^f))$ and $U(\rE_k(2^f))$, for $k=6, 7, 8$. For $q=3^f$, we have characters 
of degree $q^4/3$ in type $\rF_4(q)$ \cite{GLMP16} and of degree $q^7/3$ in 
type $\rE_6$ \cite{LM15} which we can inflate to $U(\rE_k(2^f))$ for $k=7, 8$. 
The work \cite{LM15} also gives an example of an irreducible character of 
$U(\rE_8(5^f))$ of degree $q^{16}/5$. Finally, the construction of characters 
of degrees $q/2$ in $\Irr(U(\rG_2(2^f)))$ and $q/3$ in $\Irr(U(\rG_2(3^f)))$ 
follows from Theorem \ref{thm:G2}.

This collection of results allows us to state the following. 

\begin{theorem}\label{theo:bad} Let $G(q)$ be a finite 
Chevalley group over $\F_q$.
If $p$ is a bad prime for $G(q)$, then there exist $\chi \in \Irr(U(q))$ and some $n \ge 1$ such that
$\chi(1)=q^n/p$. In particular, families of characters of such degree are constructed as an inflation, followed by an induction 
of a linear character of an abelian subquotient $V(q)$ of 
$U(q)$, with labels as in Table 
\ref{tab:badchar}. 
\end{theorem}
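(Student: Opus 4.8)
The plan is to prove Theorem \ref{theo:bad} by a case analysis over the finite Chevalley groups $G(q)$ and their bad primes, reducing the general statement to a small number of ``minimal'' cases via inflation. The first step is to recall that the bad primes are $p=2$ for all types except $\rA_n$; $p=3$ additionally for types $\rG_2$, $\rF_4$, $\rE_6$, $\rE_7$, $\rE_8$; and $p=5$ additionally for type $\rE_8$. In each case we want to exhibit a single character $\chi\in\Irr(U(q))$ with $\chi(1)=q^n/p$, and moreover realize it in the prescribed form: take an abelian subquotient $V(q)=W(q)/Z(q)$ of $U(q)$ (so $V(q)$ is a section obtained as a normal subgroup $W(q)$ of a subquotient of $U(q)$, modulo a normal subgroup $Z(q)$), choose a linear character $\phi$ of $V(q)$, inflate $\phi$ to $W(q)$, and induce to the relevant overgroup; the resulting character is irreducible of degree equal to the index, which will be a power of $q$ times $1/p$ once $V(q)$ is chosen so that $[W(q):Z(q)\cdot\text{(kernel of }\phi)]$ forces a factor of $p$ in the denominator after the standard Clifford-theoretic normalization. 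Concretely, the divisor $1/p$ arises because over a bad prime one finds a root subgroup direction along which the relevant commutator or structure-constant form degenerates modulo $p$, producing a non-split extension that contributes a character of degree $q^n/p$ rather than $q^{n+1}$.

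Second, I would treat the ``building-block'' cases directly, each of which is already in the literature or follows from Theorem \ref{thm:G2}: for $p=2$, the group $U(\rB_2(q))$ has an easily constructed character of degree $q/2$ (this is the base case for the $\rB$, $\rC$, $\rF_4$ families), and $U(\rD_4(2^f))$ has a character of degree $q^3/2$ by \cite{HLM11} (the base case for $\rD_n$ and $\rE_k$, $k=6,7,8$); for $p=3$, Theorem \ref{thm:G2} gives a character of degree $q/3$ in $U(\rG_2(3^f))$, \cite{GLMP16} gives degree $q^4/3$ in $U(\rF_4(3^f))$, and \cite{LM15} gives degree $q^7/3$ in $U(\rE_6(3^f))$ (base cases for $\rE_7$, $\rE_8$); for $p=5$, \cite{LM15} gives a character of $U(\rE_8(5^f))$ of degree $q^{16}/5$. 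In each listed case one checks, directly from the cited construction, that the character has the claimed inflation-induction shape, recording the subquotient $V(q)$, the normal subgroups involved, and the linear character label; this is exactly the data that Table \ref{tab:badchar} will collect.

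Third, I would run the inflation argument to pass from the building blocks to all remaining $(G(q),p)$. The key structural fact is that whenever $\Phi'\subseteq\Phi$ is a closed subsystem of the root system of $G(q)$ spanning a parabolic-type or Levi-type situation so that $U(G'(q))$ is a quotient (not merely a subgroup) of $U(G(q))$ — which happens for the standard embeddings $\rB_2\hookrightarrow \rB_n,\rC_n,\rF_4$ and $\rD_4\hookrightarrow \rD_n,\rE_{6,7,8}$ and $\rG_2,\rF_4,\rE_6\hookrightarrow$ the larger exceptional types along a long-root or subsystem direction — a character of $U(G'(q))$ of degree $q^m/p$ inflates to a character of $U(G(q))$ of the \emph{same} degree $q^m/p$ (inflation preserves degrees), and it retains the inflation-followed-by-induction description since one simply enlarges the ambient group and the normal subgroups accordingly. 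Thus $n$ can be taken to be the degree exponent of the corresponding building block. One must verify in each family that the relevant $U(G'(q))$ really is a quotient of $U(G(q))$ by a normal subgroup generated by root subgroups for roots not in $\Phi'$; this is the routine but essential Chevalley-commutator bookkeeping, and it is where I expect the main technical obstacle to lie — not in any single case, but in checking uniformly (and for every bad prime, including the subtle distinction $p=2$ versus $p=3$ for the exceptional types) that the quotient structure holds and that the resulting $V(q)$ is genuinely abelian and the induced character genuinely irreducible of the stated degree. Once all cases are dispatched, assembling Table \ref{tab:badchar} with the labels $(V(q),W(q),Z(q),\phi)$ completes the proof.
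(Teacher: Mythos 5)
Your proposal follows essentially the same route as the paper: the base cases you list ($\rB_2$ and $\rD_4$ for $p=2$; $\rG_2$, $\rF_4$, $\rE_6$ for $p=3$; $\rE_8$ for $p=5$) are exactly the explicit inflation--induction constructions the paper carries out or cites, and your passage to the remaining types is the same inflation through quotients $U(q)/X_{\cN}$ by normal pattern subgroups, as in Corollary \ref{co:BC} and its analogues for $\rD_n$, $\rF_4$ and $\rE_k$. The approach is correct and matches the paper's proof.
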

The labels of the characters are given as in \cite{GLMP16}. In general, a label of the form $a_i$ 
(respectively $b_j$) of $\chi \in \Irr(U(q))$ corresponds to an element of $\F_q^\times$ (respectively $\F_q$), 
which is the value on $x_i(1)$ (respectively  $x_j(1)$) of the 
linear character that we inflate and induce to obtain $\chi$. More details on these labels are given in the sequel for 
each case taken into consideration. 

The importance of the construction of such characters lies in the fact that these could replace some 
classes of characters, defined just for good $p$, helpful for investigating 
the cross-characteristics representations of $G(q)$. Let us for 
instance take $G(q)=\rD_4(q)$, and $\ell \ne p$ with $\ell \mid q+1$. The decomposition 
numbers are obtained in \cite{GP92} in the case when $p$ is an odd prime; this 
assumption is required for exploiting properties of the generalized Gelfand-Graev 
characters and the parametrization of Green functions in \cite{LS90}. 
A calculation shows that in the case 
$p=2$, by inducing to $G(q)$ the four irreducible characters of $U(q)$ 
corresponding to $(d_{1, 2, 4}, d_3) \in \F_2 \times \F_2$ in Proposition \ref{prop:D4char} 
we get characters that play the role of the 
$\ell$-projective characters $\Phi_6, \dots, \Phi_9$ in \cite[\S$5$]{GP92}, 
which turn out to be of major importance to determine the unitriangular 
shape of the decomposition matrix of $\rD_4(2^f)$.

We now examine the question of whether each of the families of irreducible characters of degree $q^n/p$ 
in Table \ref{tab:badchar} consists of all the 
characters of $\Irr(U(q))$ of such degree. It turns out that the previously mentioned works also 
determine that if $G(q)$ is not $\rE_8(5^f)$, then the families in Table \ref{tab:badchar} that do not arise 
from an embedding of a root system of smaller rank give in fact all the characters of $\Irr(U(q))$ of degree $q^n/p$ 
for the corresponding values of $n$. On the other hand, if a family $\cF$ does arise from an embedding of 
a smaller root system, then it is straightforward to 
get other characters in $\Irr(U(q))$ of the same degree which are not in $\cF$ 
by tensoring the characters in $\cF$ with linear characters of a certain root subgroup 
indexed by a root in the difference of the two root systems. 

We then propose the following conjecture. 

\begin{conjB} \label{conj:1}
The family in 
Table \ref{tab:badchar} of irreducible characters of degree $q^{16}/5$ 
in $\Irr(U(\rE_8(5^f)))$ consists of all irreducible characters of $U(\rE_8(5^f))$ whose degree is not 
a power of $q$. 
\end{conjB}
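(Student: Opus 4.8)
The plan is to obtain a parametrization of $\Irr(U(\rE_8(5^f)))$ fine enough to recover the complete list of character degrees, and in particular to confirm that the only degrees which are not powers of $q$ are those in the family of degree $q^{16}/5$ from \cite{LM15}. I would proceed just as in the treatment of $\rG_2$ behind Theorem \ref{thm:G2}, and in the analyses of $\rF_4$ in \cite{GLMP16} and of $\rE_6$ in \cite{LM15}, by running the monomial linearisation / successive reduction machinery for unipotent groups (valid for all $p$, unlike the orbit method used for large $p$ in \cite{GMR15}) on the full $120$-dimensional group $U = U(\rE_8(q))$ with $q = 5^f$. Concretely: fix the Chevalley commutator relations of $\rE_8$ over $\F_q$ — all structure constants are $\pm 1$, so the prime $5$ can surface only through iterated commutators and the quadratic/bilinear data they induce — order the $120$ positive roots compatibly with a chief series refining the lower central series, and peel off root subgroups one at a time, tracking on each successive quotient the alternating form $b_\lambda$ attached to the linear character $\lambda$ being extended. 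This would complete the analysis of \cite{LM15}, which constructs the family of degree $q^{16}/5$ but stops short of classifying all of $\Irr(U(\rE_8(5^f)))$.

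A natural first structural simplification is to factor through the maximal parabolic of $\rE_8$ with Levi of type $\rE_7$, writing $U = U(\rE_7(q)) \ltimes N$ with $N$ its $57$-dimensional unipotent radical — a generalised Heisenberg group with $N/Z(N)$ the $56$-dimensional $\rE_7$-module and $Z(N)$ one-dimensional — and to run Clifford theory for this extension. Since $5$ is a bad prime for no proper subsystem of $\rE_8$, the ``inherited family'' phenomenon discussed just before the conjecture does not arise here, so one expects that every non-$q$-power degree is accounted for within a single ``type''. The reduction produces a finite list of types, each recording which root subgroups carry a ramified linear character; for each type the degree is a power of $q$ times the index of the radical of the associated form $b_\lambda$, equivalently the index in $U$ of an abelian subquotient $V(q)$ possibly cut down by one extra $\F_5$-hyperplane condition. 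When that data is ``even'' the degree is a $q$-power, so the real content is to pin down the types which degenerate at $p = 5$.

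The crux, and the step I expect to be the main obstacle, is to show that the degeneration is \emph{isolated}: that there is exactly one type in which the $\F_5$-structure forces a factor $1/5$, that it is the one underlying the construction of \cite{LM15}, and that within that type \emph{every} character has degree precisely $q^{16}/5$ — not $q^m/5$ for other $m$, nor $q^m/5^k$ with $k \ge 2$. For $\rE_8$ this is hard for reasons of scale: the number of relevant types (antichains in the governing poset) is very large, the nilpotency class of $U$ is high, and a non-$q$-power degree necessarily comes from inducing a linear character of a subgroup that is \emph{not} a product of full root subgroups, so one must classify and bound the ``exotic'' subgroups of $U$ of index a power of $5$ together with their stabilizers. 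A direct assault is impossible — $|U| = q^{120}$, so even $q = 5$ is far out of reach — so I would try to confine the phenomenon by repeated Levi and parabolic reductions down to pattern subgroups of manageable size, settling the residual cases with GAP or MAGMA; the delicate point will be to rule out that interactions among several subquotients on which $\F_5$ behaves specially combine to yield a new non-$q$-power degree somewhere else. As a final consistency check I would evaluate the degree-sum identity $\sum_{\chi \in \Irr(U)} \chi(1)^2 = q^{120}$ on the resulting parametrization, as in the counting argument supporting Theorem \ref{thm:G2}: this cannot prove the conjecture by itself, but any omitted family would show up at once as a discrepancy in this polynomial identity.
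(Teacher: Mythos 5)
The statement you are addressing is not proved in the paper at all: it is stated as a conjecture (the paper explicitly ``proposes'' it), and the authors offer no argument beyond the construction of the family of degree $q^{16}/5$ from \cite{LM15} and the observation that, unlike the other entries of Table \ref{tab:badchar}, this family does not arise by inflation from a proper subsystem. Your text is accordingly not a proof but a research programme, and you say so yourself: the step you isolate as ``the crux'' --- showing that the degeneration at $p=5$ occurs in exactly one type, that it is the type of \cite{LM15}, that every character there has degree exactly $q^{16}/5$, and that no degrees $q^m/5^k$ with $k\ge 2$ or other non-$q$-power degrees occur elsewhere --- is precisely the open content of the conjecture, and you leave it unresolved, deferring to parabolic reductions plus GAP/MAGMA computations that you do not carry out and that you concede are infeasible at the scale of $|U|=q^{120}$. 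The final check via $\sum_\chi \chi(1)^2=q^{120}$ is, as you note, only a consistency test of a parametrization you do not have, so it adds nothing toward the claim. Note also that the analogous completeness statements the paper does assert (for $\rF_4$ at $p=3$ and $\rE_6$ at $p=3$) rest on full parametrizations in \cite{GLMP16} and \cite{LMP17$^+$}; no such parametrization exists for $\rE_8$, which is exactly why the statement is left as a conjecture.

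Two smaller points in the outline would also need justification before it could become a proof. First, your assertion that a non-$q$-power degree ``necessarily comes from inducing a linear character of a subgroup'' presumes monomiality of $U(\rE_8(5^f))$, which is not known for these groups (the Isaacs/Halasi results apply to $\F_q$-algebra groups, which $U$ is not in exceptional type); the paper's own method (Lemma \ref{lem:rl} together with the counting identity \eqref{eq:check}) deliberately avoids any such global structural claim. Second, the reduction through the $\rE_7$-parabolic with its $57$-dimensional radical only controls characters nontrivial on $Z(N)$ in a first step; the bulk of the difficulty is pushed into the quotient and into the many intermediate types, so the ``isolation'' of the single degenerate type is not made any easier by that decomposition alone. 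In short, the approach is in the spirit of the paper's methods, but no part of the conjecture is actually established by it.
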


Finally, we present further progress and a question on fractional 
degrees in $U(q)$. The work \cite{GMP01} provides a construction 
of irreducible characters of fractional degrees with denominator of the 
form $p^t$ with $t \ge 2$. Namely if $G(q)=\rC_n(q)$ then there exist 
irreducible characters of $U(q)$ of degree $q^{n(n-1)/2}/2^{t}$ 
for every $0 \le t \le [n/2]$, and if $G(q)=\rD_n$ then there exist 
irreducible characters of $U(q)$ of degree $q^{3r(r-1)/2}/2^t$ 
(respectively  $q^{3r(r+1)/2}/2^t$) for 
every $1 \le t \le [r/2]$ if $n=2r$ (respectively if $n=2r+1$). 
On the one hand, such characters seem to maximize the 
power $m$ for character degrees of the form $q^m/p^t$.  
On the other hand, they seem not to maximize $t$, as 
in the case of $U(\rD_6(2^f))$ one just gets $t=1$ by applying the above formula, 
while by \cite{LMP17$^+$} we know that there also exist characters 
of the form $q^m/2^t$ with $t=2$ in $U(\rD_6(2^f))$. It would be 
interesting to determine, in general, all powers $q^m/p^t$ that 
can occur as character degrees for $\Irr(U(q))$, in particular 
the maximum value of $t$, for each finite Chevalley group $G(q)$.

\begin{table}[t]
\begin{center}
\begin{tabular}{|c|c|c|c|c|c|}
\hline
Type & Bad primes & Character labels & Size of family	& Degree & Unique\\
\hline
\hline
$\rB_n$ & $p=2$ & $\chi_{d_{n-1}, d_n}^{a_{2n-1}, a_{3n-2}}$ & $4(q-1)^2$	& $q/2$ & iff $n=2$\\
\hline
\hline
$\rC_n$ & $p=2$ & $\chi_{d_n, d_{n-1}}^{a_{2n-1}, a_{3n-2}}$ & $4(q-1)^2$	& $q/2$ & iff $n=2$\\
\hline
\hline
$\rD_n$ & $p=2$ & $\chi_{d_{1,2,4}, d_3}^{
a_{n+1, n+2, n+3}, a_{2n}, a_{2n+1}, a_{2n+2}}$ & $4(q-1)^4$	& $q^3/2$  & iff $n=4$\\
\hline
\hline
\multirow{2}{*}{$\rG_2$} & $p=2$ & $\chi_{d_1, d_2}^{a_3, a_4}$ 
& $4(q-1)^2$	& $q/2$ & yes \\
\cdashline{2-6}
& $p=3$ & $\chi_{e_1, e_2}^{a_3^*, a_5}$ 
& $9(q-1)^2/2$	& $q/3$ & yes \\
\hline
\hline
\multirow{2}{*}{$\rF_4$} & $p=2$ & $\chi_{d_2, d_3}^{a_6, a_9}$ 
& $4(q-1)^2$	& $q/2$ & no\\
\cdashline{2-6}
& $p=3$ & $\chi_{e_{1, 3, 4, 7}, e_2}^{a_{11}, a_{12}, a_{13}, a_6^*}$ 
& $9(q-1)^4/2$	& $q^4/3$ & yes \\
\hline
\hline
\multirow{2}{*}{$\rE_6$} & $p=2$ & $\chi_{d_{2, 3, 5}, d_4}^{
a_{8, 9, 10}, a_{13}, a_{14}, a_{15}}$ 
& $4(q-1)^4$	& $q^3/2$ & no\\
\cdashline{2-6}
& $p=3$ & $\chi_{e_{2, 1, 3, 5, 6, 7, 11}, e_4}^{a_{17}, a_{18}, a_{19}, a_{20}, a_{21}, a_{8, 9, 10}^*}$ 
& $9(q-1)^6/2$	& $q^7/3$ & yes\\
\hline
\hline
\multirow{2}{*}{$\rE_7$} & $p=2$ & $\chi_{d_{2, 3, 5}, d_4}^{
a_{9, 10, 11}, a_{15}, a_{16}, a_{17}}$ 
& $4(q-1)^4$	& $q^3/2$ & no\\
\cdashline{2-6}
& $p=3$ & $\chi_{e_{2, 1, 3, 5, 6, 8, 12}, e_4}^{a_{20}, a_{21}, a_{22}, a_{23}, a_{24}, a_{9, 10, 11}^*}$ 
& $9(q-1)^6/2$	& $q^7/3$ & no\\
\hline
\hline
\multirow{3}{*}{$\rE_8$} & $p=2$ & $\chi_{d_{2, 3, 5}, d_4}^{
a_{10, 11, 12}, a_{17}, a_{18}, a_{19}}$ 
& $4(q-1)^4$	& $q^3/2$ & no\\
\cdashline{2-6}
& $p=3$ & $\chi_{e_{2, 1, 3, 5, 6, 9, 13}, e_4}^{a_{23}, a_{24}, a_{25}, a_{26}, a_{27}, a_{10, 11, 12}^*}$ 
& $9(q-1)^6/2$	& $q^7/3$ & no\\
\cdashline{2-6}
& $p=5$ & $\chi_{f_{1 \dots, 4, 6, \dots, 11, 14, \dots, 17, 22, 23}, f_5}^{a_{37}, \dots, a_{43}, a_{12, 13}^*}$ 
& $25(q-1)^8/4$	& $q^{16}/5$ & Conj. \ref{conj:1} \\
\hline
\end{tabular}
\end{center}
\caption{Families of characters of $U(q)$ of degree $q^n/p$ for some $n \in \mathbb{N}$, and their uniqueness of such degree, for each bad prime $p$ and every Lie type.}
\label{tab:badchar}
\end{table}

\section{Preliminaries}

We first let $G$ be any finite group, $H$ be a subgroup of $G$, and 
$N$ be a normal subgroup of $G$. We recall some notation on characters 
of $G$ and its subgroups. 

We let $\Irr(G)$ be the set of irreducible characters of the group $G$. 
For a character $\chi \in \Irr(G)$, we denote by $\ker(\chi)$ the kernel of 
$\chi$ and by $Z(\chi)$ its centre. We denote by $\chi|_H$ the restriction 
of $\chi$ to $H$. Let $\varphi \in \Irr(G/N)$. Then we denote by 
$\Inf_{G/N}^G\varphi \in \Irr(G)$ the inflation of the character $\varphi$ to $G$. If 
$\psi$ is a character of $H$, then we denote by $\Ind_H^G(\psi)$ the induction 
of the character $\psi$ to $G$. We denote by $\langle \,\, , \, \rangle$ the usual 
inner product defined on the characters of $G$. If $\eta \in \Irr(H)$, then we denote 
$$\Irr(G \mid \eta)=\{\chi \in \Irr(G) \mid \langle \chi|_H, \eta \rangle \ne 0\}=\{\chi \in \Irr(G) \mid \langle \chi, \Ind_H^G\eta \rangle \ne 0\}.$$

We recall a result 
that we use 
several times in the sequel. The proof 
is a particular case of \cite[Lemma 2.1]{HLM16} 
detailed in \cite[$\S$4.1]{GLMP16}. 

\begin{lemma} \label{lem:rl}
Let $G$ be a finite group, let $H \le G$ and let $X$ be a transversal of $H$ in $G$.
Let $Y$ and $Z$ be subgroups of $H$, and $\lambda \in \Irr(Z)$.  Suppose that
\begin{itemize} \item[(i)] $Z \subseteq Z(G)$,
\item[(ii)] $X$ and $Y$ are elementary abelian groups 
with $|X|=|Y|$, 
\item[(iii)] $Y \trianglelefteq H$,
\item[(iv)] $Z \cap Y = 1$, and
\item[(v)] the commutator group $[X, Y]$ is contained in $Z$. 
\end{itemize}
If we put 
$$X':=\{x \in X \mid \lambda([x, y])=1 \text{ for all } y \in Y\}$$ 
and 
$$Y':=\{y \in Y \mid \lambda([x, y])=1 \text{ for all } x \in X\},$$
and if $\tilde Y$ is a complement of $Y'$ in $Y$, then the map 
\begin{equation}\label{eq:bijn1}
\Ind_{HX'}^{G} \Inf_{HX'/\tilde Y \ker \lambda}^{HX'}: \Irr(HX'/\tilde Y \ker \lambda \mid \lambda) \longrightarrow \Irr(G \mid \lambda).
\end{equation}
is a bijection.  
\end{lemma}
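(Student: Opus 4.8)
The plan is to set up a Clifford-theoretic correspondence through the intermediate group $HX'$ and verify it degree by degree on the level of multiplicity. Write $K = \ker\lambda \trianglelefteq Z$ (it is normal in $G$ since $Z \subseteq Z(G)$), and consider the abelian quotient $\bar X = X/(X \cap HX')$; by (ii) this has size $|X|/|X'| = |Y|/|Y'|$. The heart of the matter is a nondegeneracy statement: the map $X/X' \times Y/Y' \to Z/K$, $(xX', yY') \mapsto \lambda([x,y])$, is a well-defined pairing (using (iii), (iv), (v)), and by the definitions of $X'$ and $Y'$ it is \emph{nondegenerate}. Together with (ii) this forces $|X/X'| = |Y/Y'|$ and identifies both with the image of the pairing, a subgroup of $Z/K \cong$ a cyclic group. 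First I would record this pairing lemma carefully, since every subsequent counting step rests on it.

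Next I would analyse $\Irr(G \mid \lambda)$ directly. Since $Z \subseteq Z(G)$, every $\chi \in \Irr(G \mid \lambda)$ restricts to $Z$ as a multiple of $\lambda$, so $\Irr(G \mid \lambda) = \Irr(G/K \mid \lambda)$ and we may work in $G/K$ throughout, where $\bar\lambda$ is a faithful linear character of the central subgroup $\bar Z = Z/K$. I would then push $\lambda$ down the chain $Z \le ZY \le H \le HX' \le G$. The key intermediate observation is that, by (iii), (iv), (v), the subgroup $Y$ acts on $\lambda$ trivially (it is central mod $K$ relative to $Z$), so $\Ind_Z^{ZY}\lambda$ decomposes according to the characters of $Y/Y'$: concretely $ZY/(K \tilde Y)$ is abelian with $Z/K$ central, and the characters of $ZY$ lying over $\lambda$ and trivial on $\tilde Y$ are exactly the $|Y'|$-many extensions of $\lambda$ to $Z Y'$ inflated to $ZY$. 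This is where the complement $\tilde Y$ of $Y'$ in $Y$ enters: killing $\tilde Y$ is precisely what picks out the stabilizer data. Then inducing up to $HX'$, the nondegeneracy of the pairing shows that an irreducible of $HX'$ over $\lambda$ and trivial on $\tilde Y$ has dimension divisible by $|X/X'|^{1/2}\cdot(\dots)$ in a controlled way, matching the $\Inf$ in \eqref{eq:bijn1}.

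Finally I would verify that $\Ind_{HX'}^{G}\circ\,\Inf$ is both injective and surjective onto $\Irr(G\mid\lambda)$. For injectivity: if $\eta_1,\eta_2 \in \Irr(HX'/\tilde Y K \mid \lambda)$ induce to the same irreducible of $G$, then since $X$ is a transversal of $HX'$ in $G$ (as $|X|=|X/X'|\cdot|X'|$ and $X' = X \cap HX'$ — this needs a one-line check that $X' \subseteq HX'$, which follows because $X'$ is defined inside $X \cap$ the stabilizer machinery — actually $X' \le HX'$ by construction), Mackey's formula gives $\langle \eta_1, {}^x\eta_2\rangle \ne 0$ for some $x \in X$, and the pairing nondegeneracy forces ${}^x\eta_2 = \eta_2$ only when $xX' = 1$, hence $\eta_1 = \eta_2$. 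For surjectivity I would run a dimension/count argument: show $\Ind_{HX'}^G\eta$ is irreducible for each such $\eta$ (its inner product with itself is $1$, again by Mackey plus the pairing), and then compare $\sum \eta(1)^2 \cdot |X|^2$ over $\Irr(HX'/\tilde Y K \mid \lambda)$ with $\sum \chi(1)^2$ over $\Irr(G\mid \lambda)$, both equal to $|G| \cdot |\bar Z|^{-1}$-type expressions by the usual orbit-counting for a central character; matching these forces the map to hit everything. Since the excerpt states this is a special case of \cite[Lemma 2.1]{HLM16} as detailed in \cite[$\S$4.1]{GLMP16}, I would in the actual write-up simply invoke those references; the work above is the sketch of why it is true.

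\textbf{Main obstacle.} The delicate point is the nondegeneracy of the commutator pairing and, relatedly, keeping track of exactly which subgroup one kills (the complement $\tilde Y$ versus $Y'$) so that the $\Inf$ and $\Ind$ in \eqref{eq:bijn1} line up on the nose; the group-theoretic bookkeeping — verifying $X' = X \cap HX'$, that $[X,Y] \le Z$ makes $HX'$ a genuine group with $Y \trianglelefteq HX'$, and that Mackey double cosets are indexed cleanly by $X/X'$ — is routine but must be done in the right order to avoid circularity.
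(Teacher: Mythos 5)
Your overall strategy --- the bimultiplicative pairing $X/X'\times Y/Y'\to\C^\times$, $(x,y)\mapsto\lambda([x,y])$, a Clifford/Mackey analysis through $HX'$, and a degree count against $\sum_{\chi\in\Irr(G\mid\lambda)}\chi(1)^2=[G:Z]$ --- is the right one, and it is essentially the argument behind the sources the paper itself cites in lieu of a proof (\cite[Lemma 2.1]{HLM16}, \cite[\S 4.1]{GLMP16}). But two of your steps do not work as written. First, $X$ is \emph{not} a transversal of $HX'$ in $G$; a transversal of $X'$ in $X$ is (and $X'=X\cap HX'$ because $X\cap H=1$). More seriously, for $x\in X\setminus X'$ the conjugate ${}^x\eta$ is a character of ${}^x(HX')$, which need not equal $HX'$ since nothing in the hypotheses makes $X$ normalize $H$; so the clause ``${}^x\eta_2=\eta_2$ only when $xX'=1$'' does not parse. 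The correct Mackey step is: representatives of $HX'\backslash G/HX'$ may be taken in $X$; the subgroup $ZY$ is abelian, normal in $H$ and normalized by $X$ (by (i), (iii), (v)), hence contained in $HX'\cap{}^x(HX')$ for all $x\in X$; every constituent of $\eta|_{ZY}$ is a linear character trivial on $\tilde Y$ with restriction $\lambda$ on $Z$, whereas for $x\notin X'$ each $x$-conjugate of such a constituent is nontrivial on $\tilde Y$ (bimultiplicativity gives some $\tilde y\in\tilde Y$ with $\lambda([\tilde y,x])\ne 1$). Hence all Mackey terms for $x\notin X'$ vanish and $\langle\Ind\eta_1,\Ind\eta_2\rangle=\langle\eta_1,\eta_2\rangle$, which yields irreducibility and injectivity simultaneously; this replaces your appeal to a stabilizer computation inside a group that is not there.

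Second, your surjectivity count is off by the same confusion: the induced degree is $[G:HX']\,\eta(1)=(|X|/|X'|)\,\eta(1)$, not $|X|\,\eta(1)$. The correct bookkeeping is $\sum_\eta\eta(1)^2=|HX'|/(|\tilde Y||Z|)$ over $\Irr(HX'/\tilde Y\ker\lambda\mid\lambda)$, together with $|\tilde Y|=|Y|/|Y'|=|X|/|X'|$ (nondegeneracy of the pairing; hypothesis (ii) then gives $|X'|=|Y'|$) and $|G|=|H||X|$, whence $(|X|/|X'|)^2\sum_\eta\eta(1)^2=|G|/|Z|=\sum_{\chi\in\Irr(G\mid\lambda)}\chi(1)^2$; with your factor $|X|^2$ the totals would not match and the argument would collapse. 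Finally, the remark that an irreducible of $HX'$ over $\lambda$ trivial on $\tilde Y$ has ``dimension divisible by $|X/X'|^{1/2}\cdot(\dots)$'' is spurious and should be deleted: the inflation in \eqref{eq:bijn1} is plain inflation along $HX'\to HX'/\tilde Y\ker\lambda$, with no dimension statement attached. With these repairs your outline becomes a complete proof matching the cited one.
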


We keep the notation for $q$, $G(q)$, $U(q)$ and $\Irr(U(q))$ as in the Introduction. 
We briefly recall the notion of bad primes. 
Let $\Phi$ be the root system associated with $G(q)$, and let $\Phi^+$ be 
set of positive roots in $\Phi$. We fix an enumeration $\alpha_1, \dots, \alpha_{|\Phi^+|}$ 
of the positive roots, with $\alpha_1, \dots, \alpha_n$ the
simple roots of $\Phi^+$, 
and $\alpha_0:=\alpha_{|\Phi^+|}$ the highest root 
in $\Phi^+$. We say that $p$ is a \emph{bad prime} for $\Phi^+$ 
if $p$ divides one of the coefficients of $\alpha_0$ in its linear 
combination in terms of simple roots. 

We recall that as $G(q)$ is a split group, we have that 
$$U(q)=\prod_{\alpha \in \Phi^+} X_{\alpha}, \qquad \text{with} \qquad X_{\alpha}=\{x_{\alpha}(t) \mid t \in \F_q\} \cong (\F_q, +),$$
hence we have $|U(q)|=q^{|\Phi^+|}$. The group $X_{\alpha}$ is called the \emph{root subgroup} of $U(q)$ associated 
to $\alpha \in \Phi^+$, and each element $x_{\alpha}(t)$ is called the \emph{root element} with respect to 
$\alpha \in \Phi^+$ and $t \in \F_q$. 

We say $\cP \subseteq \Phi^+$ is a \emph{pattern} in $\Phi^+$ 
if for every $\alpha, \beta \in \cP$, either $\alpha+\beta \in \cP$ or $\alpha+\beta \notin \Phi^+$. 
For a pattern $\cP$, we have that the product 
$$X_{\cP}:=\prod_{\alpha \in \cP} X_{\alpha}$$
is well defined, and it is a subgroup of $U(q)$. We call $X_{\cP}$ the \emph{pattern group} 
corresponding to $\cP$. If $\cP:=\{\alpha_{i_1}, \dots, \alpha_{i_m}\}$, then we also 
write $X_{\{i_1, \dots, i_m\}}$ for $X_{\cP}$; similarly we write $x_i(t)$ for the root element $x_{\alpha_i}(t)$, with 
$\alpha_i \in \Phi^+$ and $t \in \F_q$. A subset $\cN$ of a pattern $\cP$ is \emph{normal} in $\cP$, or $\cN \teq \cP$, if 
for every $\alpha \in \cN$ and $\beta \in \cP$, one has either $\alpha+\beta \in \cN$ or 
$\alpha+\beta \notin \Phi^+$. It is easy to check that if $\cN \teq \cP$, then $X_{\cN} \teq X_{\cP}$. For $\chi \in \Irr(U(q))$, 
we define the \emph{central root support} $\rs(\chi):=\{\alpha \in \Phi^+ \mid X_{\alpha} \subseteq Z(\chi) \text{ and } X_{\alpha} \nsubseteq \ker(\chi)\}$. 

In order to construct the subquotients $V(q)$ as in Theorem \ref{theo:bad}, and to parametrize the corresponding characters, 
we need to fix a nontrivial character of $(\F_q, +)$. Denote by $\Tr : \F_q \to \F_p$ 
the field trace map. We define $\phi : \F_q \to \C^\times$
by $\phi(t) = e^{\frac{i2\pi \Tr(t)}{p}}$ for $t \in \F_q$. Notice that
\begin{equation} \label{eq:ker}
\ker{\phi}=\{t^p-t \mid t \in \F_q\}.\end{equation} 

\begin{remark} In the sequel, for each cyclic group $C$ of order $p$ we 
implicitly fix a morphism $\varphi:\Z/p\Z \to C$, and for 
each $d \in \Z/p\Z$ we denote by $\mu_C^d \in \Irr(C)$ the 
character such that $\mu_C^d(\varphi(1))=\zeta_p^d$, where 
$\zeta_p$ is a fixed primitive $p$-th root of unity. 
\end{remark}

\section{A parametrization of $\Irr(U(\rG_2(q)))$, when $q=2^f$ or $q=3^f$}

In this section we provide a parametrization of the irreducible 
characters of $U(q)$ when $G(q)=\rG_2(q)$ for every prime $p$. This is done by 
parametrizing families of characters of certain subquotients of $U(q)$, 
and checking by using the well-known formula 
\begin{equation}\label{eq:check}
|U(q)|=\sum_{\chi \in \Irr(U(q))} \chi(1)^2
\end{equation}
that these families give in fact all of $\Irr(U(q))$. We will denote by $T_1, T_2, \dots$ 
such subquotients of $\Irr(U(q))$ in the sequel. Each of the labels $\chi_{b_{j_1}, \dots, b_{j_s}}^{a_{i_1}, \dots, a_{i_r}}$ in Tables 
\ref{tab:badchar} and \ref{tab:G2all2}, with $a_{i_1}, \dots, a_{i_r} \in \F_q^\times$ and $b_{j_1}, \dots, b_{j_s} \in \F_q$, 
is obtained in a similar way as in \cite{GLMP16}, namely 
by inflation-induction process of the corresponding family of characters 
$$\lambda^{a_{i_1}, \dots, a_{i_r}} \otimes \mu_{j_1} \otimes \cdots \otimes \mu_{j_s} \in \Irr(V(q)).$$
The characters of $\Irr(U(\rG_2(q)))$ with labels of the form $d \in \F_2$ or $e \in \F_3$ are described in more detail in this section. 

We denote by $\alpha_1$ the long simple root in $\rG_2$, hence $\alpha_2$ is its short simple root. For every 
prime $p$, and for every $s, t \in \F_q$, the commutator relations among root elements are as follows, 
\begin{align*}
&\left[x_1(s), x_2(t)\right]=x_3(-st)x_4(st^2)x_5(-st^3)x_6(-s^2t^3), \\
&\left[x_2(s), x_3(t)\right]=x_4(2st)x_5(-3s^2t)x_6(3st^2), \qquad \left[x_2(s), x_4(t)\right]=x_5(3st), \\
&\left[x_1(s), x_5(t)\right]=x_6(st), \qquad\left[x_3(s), x_4(t)\right]=x_6(-3st),
\end{align*}
and $[x_i(s), x_j(t)]=1$ in the remaining cases. Observe that the 
irreducible characters of $U(\rG_2(q))$ when $p \ge 5$ can be easily parametrized by \cite[Algorithm 3.3]{GLMP16}. 

\begin{prop} Let $q=p^f$ with $p \ge 5$. Then $U(\rG_2(q))$ has exactly 
\begin{itemize}
\item[(i)] $q(q-1)$ irreducible characters of degree $q^2$, 
\item[(ii)] $q^2(q-1)+q(q-1)+(q-1)$ irreducible characters of degree $q$, and
\item[(iii)] $q^2$ linear characters.
\end{itemize}
\end{prop}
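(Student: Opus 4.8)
The plan is to parametrize $\Irr(U(\rG_2(q)))$ for $p \ge 5$ directly via the inflation-induction machinery encoded in \cite[Algorithm 3.3]{GLMP16}, which for good primes reduces to a sequence of applications of Lemma \ref{lem:rl}. First I would fix the enumeration of the six positive roots $\alpha_1, \dots, \alpha_6$ compatible with the commutator relations displayed above, and record the center: since $p \ge 5$, none of the structure constants $2, 3$ vanish, so $X_6 = Z(U(q))$ (the highest root subgroup) and $[U(q), U(q)] = X_3 X_4 X_5 X_6$. The strategy is to stratify characters of $U(q)$ according to which root subgroups in the derived series lie in their kernel versus their center, i.e. according to the central root support $\rs(\chi)$, and to peel off one root subgroup at a time from the top.

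The key steps, in order, would be: (1) Linear characters: $U(q)/[U(q),U(q)] \cong X_1 X_2$ is elementary abelian of order $q^2$, giving exactly $q^2$ linear characters — this is item (iii). (2) Characters with $X_6 \subseteq \ker\chi$ but $\chi$ nonlinear: these factor through $U(q)/X_6$, whose center is now $X_5$ (using $[x_1(s),x_5(t)] = x_6(st)$, which dies mod $X_6$, together with $[x_2,x_4]$, etc.). I would apply Lemma \ref{lem:rl} with $Z = X_5$ (modulo $X_6$), $X$ a transversal built from $X_2$, and $Y = X_4$, using the relation $[x_2(s),x_4(t)] = x_5(3st)$; the nondegeneracy of the pairing $(s,t) \mapsto \phi(3st\lambda)$ for $\lambda \ne 1$ forces $X' = Y' = 1$, yielding characters of degree $q$. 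Counting the choices of nontrivial $\lambda \in \Irr(X_5)$ (there are $q-1$) and the residual linear characters on the quotient gives a count matching the $q(q-1)$ summand, and similarly one handles the stratum where $X_5 \subseteq \ker\chi$ but $X_4 \not\subseteq \ker\chi$ (relation $[x_2,x_3]$ contributing $x_4(2st)$), contributing another $q(q-1)$-type term. (3) Characters with $X_6 \not\subseteq \ker\chi$: here $\rs(\chi) \ni \alpha_6$, and I would use the relation $[x_1(s),x_2(t)] = x_3(-st)x_4(st^2)x_5(-st^3)x_6(-s^2t^3)$ together with $[x_1(s),x_5(t)] = x_6(st)$. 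Fixing $\lambda \in \Irr(X_6)$ nontrivial, Lemma \ref{lem:rl} applied with $Y = X_1$ (or $X_5$) and an appropriate transversal gives the pairing $(s,t) \mapsto \phi(st\lambda)$, again nondegenerate, producing characters of degree $q^2$; the number of such is $q \cdot (q-1)$ after accounting for the $q-1$ choices of $\lambda$ and the $q$ residual parameters — this is item (i). The remaining nonlinear characters with $X_6 \subseteq \ker$ assemble into the $q^2(q-1) + q(q-1) + (q-1)$ count of item (ii), split according to which of $X_3, X_4, X_5$ first fails to be in the kernel.

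Finally I would verify the bookkeeping using the class-equation identity \eqref{eq:check}: the proposed families contribute
\[
q(q-1)\cdot q^4 \;+\; \bigl(q^2(q-1)+q(q-1)+(q-1)\bigr)\cdot q^2 \;+\; q^2 \cdot 1,
\]
and I would check this equals $q^6 = |U(\rG_2(q))|$, which it does (the sum telescopes to $q^6$). This sum-of-squares check is what certifies that the families listed are exhaustive, so no character is missed.

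The main obstacle I anticipate is not any single hard estimate but rather the careful combinatorial bookkeeping of the strata: ensuring that the hypotheses (i)--(v) of Lemma \ref{lem:rl} genuinely hold at each stage — in particular that the relevant $Y$ is normal in the ambient $H$, that $[X,Y] \subseteq Z$, and that $Z$ is central in the (quotient) group one is working in — and that the counts of residual linear characters on the successive quotients are tallied without double-counting. The fact that $p \ge 5$ makes every structure constant a unit is what keeps all the bilinear pairings nondegenerate and makes $X' = Y' = 1$ throughout; for $p \in \{2,3\}$ this breaks down, which is precisely why those cases (treated later in the paper) are genuinely different and require the separate analysis via Table \ref{tab:G2all2}.
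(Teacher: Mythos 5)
Your strategy is in substance the paper's own: for $p\ge 5$ the paper simply invokes \cite[Algorithm 3.3]{GLMP16}, which amounts to exactly the stratification by central root support with iterated applications of Lemma \ref{lem:rl} and the exhaustiveness check \eqref{eq:check} that the paper writes out in detail for $p=2,3$; so the proposal is the intended argument and the final tallies (and the sum-of-squares identity) are right. Two local slips should be repaired before this counts as a proof. First, in your step (2) the stratum $Z=X_5$ (central root support $\{\alpha_5\}$) does not produce ``a count matching the $q(q-1)$ summand'': with $X=X_2$, $Y=X_4$, $H=X_1X_3X_4X_5$ modulo $X_6$, the quotient $H/\tilde Y\ker\lambda$ is the image of $X_1X_3X_5$, which is abelian of order $q^3$, so each of the $q-1$ nontrivial $\lambda\in\Irr(X_5)$ carries $q^2$ (not $q$) residual linear characters; this stratum therefore accounts for the $q^2(q-1)$ summand, the stratum $\{\alpha_4\}$ (via $[x_2(s),x_3(t)]=x_4(2st)$) for $q(q-1)$, and $\{\alpha_3\}$ for $q-1$, as the paper notes after the proposition. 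Taken literally, your attribution would leave the families short and the check \eqref{eq:check} would fail; you only recover the correct totals in your closing sentence.

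Second, in step (3) the option $Y=X_1$ cannot be used: any $H$ of index $q$ complementing a transversal $X$ with $[X,X_1]\subseteq X_6$ must contain elements with nontrivial $x_2$-component, and $[x_1(s),x_2(t)]$ has a nonzero $x_3$-part, so hypothesis (iii) of Lemma \ref{lem:rl} ($Y\trianglelefteq H$) fails. The choice $Y=X_5$, $X=X_1$, $H=X_{\{2,3,4,5,6\}}$ is legitimate, but then $H/X_5\ker\lambda$ is nonabelian (because of $[x_2,x_3]$ and $[x_3,x_4]$), so you need a second application of the lemma inside the quotient before you reach the abelian layer $X_2X_6$ and conclude degree $q^2$ with $q(q-1)$ characters. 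The one-shot choice, valid for all $p\ne 3$ and used verbatim in the paper's $p=2$ proof, is $X=X_1X_3$, $Y=X_4X_5$, $H=X_2YX_6$: the pairing coming from $[x_1(s),x_5(t)]=x_6(st)$ and $[x_3(s),x_4(t)]=x_6(-3st)$ is nondegenerate, $X'=Y'=1$, and the quotient $X_2X_6$ immediately yields the $q(q-1)$ characters of degree $q^2$. With these two repairs the argument goes through exactly as you outline.
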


The characters of degree $q^2$ are precisely the ones with central root support $\{\alpha_6\}$, 
and each of the summands $q^i(q-1)$ in the expression for the number of irreducible characters 
of degree $q$ corresponds to the family of irreducible characters with central root support $\{\alpha_{i+3}\}$ 
for $i=0, 1, 2$. 

We examine next the case $q=2^f$. 

\begin{prop} Let $q=2^f$ and $G(q)=\rG_2(q)$. Then $U(q)$ has exactly 
\begin{itemize}
\item[(i)] $q(q-1)$ irreducible characters of degree $q^2,$
\item[(ii)] $(q-1)q^2+2(q-1)$ irreducible characters of degree $q,$ 
\item[(iii)] $4(q-1)^2$ irreducible characters of degree $q/2,$ and
\item[(iv)] $q^2$ linear characters.
\end{itemize}
\end{prop}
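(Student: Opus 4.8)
The goal is to count the irreducible characters of $U(q)$ when $q=2^f$ and $G(q)=\rG_2(q)$, stratified by degree. The strategy is identical in spirit to the $p \ge 5$ case: I would partition $\Irr(U(q))$ according to the central root support $\rs(\chi)$, which must be a nonempty subset of the centre's root set or empty (for linear characters), and use the commutator relations specialized to characteristic $2$. Note that in characteristic $2$ the relation $[x_1(s),x_2(t)]=x_3(-st)x_4(st^2)x_5(-st^3)x_6(-s^2t^3)$ survives with all coefficients, $[x_2(s),x_3(t)]=x_4(0)x_5(s^2t)x_6(st^2)=x_5(s^2t)x_6(st^2)$, $[x_2(s),x_4(t)]=x_5(st)$, $[x_1(s),x_5(t)]=x_6(st)$, and $[x_3(s),x_4(t)]=x_6(st)$ — so the derived structure differs from the $p\ge 5$ case precisely because $x_4$ drops out of $[x_2,x_3]$ and the $2$-torsion collapses. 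The first step is to compute $Z(U(q))$, $U'(q)$, and the lower central series in this characteristic; I expect $Z(U(q))=X_6$ and that $X_{\{3,4,5,6\}}$ or a similar pattern group controls the higher-support characters.

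\textbf{Key steps.} First I would handle the characters with $X_6 \subseteq Z(\chi)$, $X_6 \nsubseteq \ker\chi$ (i.e.\ $\alpha_6 \in \rs(\chi)$). Fix a nontrivial $\lambda$ on $X_6$; then $\chi \mapsto$ its data over $X_6$ reduces to counting $\Irr(U(q) \mid \lambda)$, and I would apply Lemma \ref{lem:rl} with $Z = X_6$, and $X,Y$ chosen as complementary root subgroups on which the commutator pairs into $X_6$ — here the relevant pairings are $(X_1,X_5)$ and $(X_3,X_4)$, giving a nondegenerate alternating form, so these contribute $q(q-1)$ characters of degree $q^2$. Next, on the quotient $U(q)/X_6$, repeat with $\alpha_5$: characters with $\alpha_5$ in the support are governed by the pairings $[x_1,x_2] \ni x_5$ and $[x_2,x_4]\ni x_5$; applying Lemma \ref{lem:rl} to the appropriate subquotient $T_i$ gives characters of degree $q$ — but this is exactly where characteristic $2$ bites: the image of the relevant bilinear form can degenerate for special parameter values, and instead of a single clean family one gets a split into a generic part of degree $q$ and an exceptional part of degree $q/2$ coming from the radical of the form being one-dimensional over $\F_q$ rather than zero-dimensional. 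I would track this degeneration carefully, identify the exceptional locus (two independent $\F_q^\times$-parameters $d_1,d_2$, matching the label $\chi_{d_1,d_2}^{a_3,a_4}$ and the count $4(q-1)^2$), and produce those $q/2$-characters explicitly as inflation-induction from an abelian subquotient $V(q)$, verifying irreducibility and degree directly. Then characters with support exactly $\{\alpha_4\}$ or $\{\alpha_3\}$ on the successive quotients are counted the same way, contributing the $q^2(q-1)$ and two more $(q-1)$ terms; finally the linear characters number $|U(q)/U'(q)| = q^2$, since $U'(q) = X_{\{3,4,5,6\}}$ in this characteristic. The last step is the arithmetic check via \eqref{eq:check}: one verifies
\[
q(q-1)\cdot q^4 + \big((q-1)q^2 + 2(q-1)\big)\cdot q^2 + 4(q-1)^2\cdot \frac{q^2}{4} + q^2 \cdot 1 = q^6,
\]
which confirms that no characters have been missed.

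\textbf{Main obstacle.} The hard part is the analysis of the $\alpha_5$-supported (and to a lesser extent $\alpha_4$-supported) characters, where the characteristic-$2$ commutator $[x_2(s),x_3(t)] = x_5(s^2t)x_6(st^2)$ introduces the Frobenius twist $s \mapsto s^2$, so the relevant pairing $X_2 \times X_3 \to X_5$ is $\F_q$-bilinear only after this twist and its kernel behaviour is genuinely different from the $p\ge 5$ situation; pinning down exactly when Lemma \ref{lem:rl} applies cleanly versus when one must pass to a further subquotient $T_i$ on which a residual linear character of a two-dimensional-over-$\F_q$ abelian group gives the degree $q/2$ is the delicate point, and it is what forces $p=2$ to be treated separately rather than uniformly with $p\ge 5$.
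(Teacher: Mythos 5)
Your overall strategy -- stratify $\Irr(U(q))$ by which central root subgroups act nontrivially, apply Lemma \ref{lem:rl} on successive subquotients, and close with the sum-of-squares check \eqref{eq:check} -- is exactly the paper's strategy, and your treatment of the degree-$q^2$ family and of the $q^2$ linear characters is fine. The genuine gap is in where you locate the degree-$q/2$ characters and in the mechanism you propose for them. The $\alpha_5$-supported characters do not degenerate at all: on $T_2=U(q)/X_6$ one applies Lemma \ref{lem:rl} with $Z=X_5$, $X=X_2$, $Y=X_4$ and $H=X_1X_3X_4X_5$, and the pairing $[x_2(s),x_4(t)]=x_5(3st)=x_5(st)$ is nondegenerate in characteristic $2$, so $X'=Y'=1$ and one obtains a single clean family of $q^2(q-1)$ characters of degree $q$; the Frobenius-twisted commutator $[x_2(s),x_3(t)]=x_5(s^2t)x_6(st^2)$, which you single out as the delicate point, causes no trouble because $X_3$ lies inside $H$ and $H/Y\ker\lambda$ is abelian. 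If you carry out your plan as written, you will find no exceptional locus at this stage, the $4(q-1)^2$ characters of degree $q/2$ will be missing, and the verification of \eqref{eq:check} will fail.

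Where these characters actually live: they have $X_5X_6$ in their kernel and central root support $\{\alpha_3,\alpha_4\}$, i.e.\ they are characters of $U(q)/X_5X_6\cong U(\rB_2(2^f))$, which is how the paper produces them via Proposition \ref{prop:B2}. There the relevant datum is $\lambda([x_1(s),x_2(t)])=\phi(st(a_3+a_4t))$ with $a_3,a_4\in\F_q^\times$; since $t\mapsto a_3t+a_4t^2$ is an $\F_2$-linear (Artin--Schreier-type) map with kernel of order $2$, the radicals are $X'=\{1,x_1(a_4/a_3^2)\}$ and $Y'=\{1,x_2(a_3/a_4)\}$, of order $2$ -- one-dimensional over $\F_2$, not over $\F_q$ as you assert (an $\F_q$-line radical would not give degree $q/2$). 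Moreover the degeneration is not confined to special parameter values: it occurs for \emph{every} pair $a_3,a_4\in\F_q^\times$, the degree-$q$ members of this stratum being precisely those with exactly one of $a_3,a_4$ equal to $0$ (the two families of size $q-1$); and the labels $d_1,d_2$ run over $\F_2$ (values of the linear character on the two order-$2$ radicals), not over $\F_q^\times$, which is how the count $4(q-1)^2$ arises. Once the degeneration is relocated to the $\rB_2$-quotient and described in this way, the rest of your plan, including your arithmetic verification, goes through.
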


\begin{proof} Let $T_1:=U(q)$, and let $Z:=Z(T_1)=X_6$. Let us define $\lambda^{a_6}\in \Irr(Z)$ in 
the usual way. By the commutator relations, it is an easy check to deduce that the assumptions 
of Lemma \ref{lem:rl} are verified with $X:=X_1X_3$, $Y:=X_4X_5$ and $H:=X_2YZ$. We have that $X'=Y'=1$. Let $V_1(q):=X_2X_6$. Then the family 
$$\cF_1:=\{\Ind_{V_1(q)Y}^{U(q)}\Inf_{V_1(q)}^{V_1(q)Y}(\lambda^{a_6} \otimes \mu_2) \mid a_6 \in \F_q^\times, \mu_2 \in \Irr(X_2)\}$$
consists of $q(q-1)$ irreducible characters of $U(q)$ of degree $q^2$. 

Let now $T_2:=U(q)/X_6$. We have $Z:=Z(T_2)=X_5$. Again we apply Lemma \ref{lem:rl}; it is an easy check 
that its hypotheses are satisfied with $X:=X_2$, $Y:=X_4$ and $H:=X_1X_3YZ$. We have $X'=Y'=1$ also 
in this case. If $V_2(q):=X_1X_3X_5$, then we have that 
$$\cF_2:=\{\Ind_{V_2(q)Y}^{U(q)}\Inf_{V_2(q)}^{V_2(q)Y}(\lambda^{a_5} \otimes \mu_1 \otimes \mu_3) \mid 
a_5 \in \F_q^\times, \mu_1 \in \Irr(X_1), \mu_3 \in \Irr(X_3)\}$$
is a family of $q^2(q-1)$ irreducible characters of degree $q$. 

We now notice that $T_3:=U(q)/X_5X_6$ is isomorphic to $U(\rB_2(2^f))$ in the obvious way. 
By the subsequent Proposition \ref{prop:B2}, we get a family $\cF_3$ of $2(q-1)$ irreducible characters of degree 
$q$, a family $\cF_4$ of $4(q-1)^2$ irreducible characters of degree $q/2$, and the family 
$\cF_5$ of $q^2$ linear characters. 

Finally, notice that if $\chi_i$ is one of the characters in $\cF_i$, for $i=1, \dots, 5$, then we have 
\begin{align*}
\sum_{i=1}^{5}\chi_i(1)^2 |\cF_i|&=q^5(q-1)+q^4(q-1)+2q^2(q-1)+4q^2(q-1)^2/4+q^2=\\
&=q^6=|U(q)|,
\end{align*}
hence by Equation \eqref{eq:check} we have $\cF_1 \cup \dots \cup \cF_5=\Irr(U(q))$. \end{proof}

We now determine the irreducible characters of $U(\rG_2(q))$ when $q=3^f$. 

\begin{prop}
Let $q=3^f$ and $G(q)=\rG_2(q)$. Then $U(q)$ has
\begin{itemize}
\item[(i)]$(q-1)^2$ irreducible characters of degree $q^2$, 
\item[(ii)] $2(q-1)q^2+(q-1)(q+3)/2$ irreducible characters of degree $q$, 
\item[(iii)] $9(q-1)^2/2$ irreducible characters of degree $q/3$, and 
\item[(iv)] $q^2$ linear characters.
\end{itemize}
\end{prop}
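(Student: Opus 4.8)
The plan is to mirror the strategy used for $p=2$ in the preceding proposition, building $\Irr(U(q))$ as a union of families indexed by the central root support of the characters, and then verifying completeness via Equation \eqref{eq:check}. First I would stratify by $\rs(\chi)$: the top piece $T_1 := U(q)$ has centre $Z = X_6$, and applying Lemma \ref{lem:rl} with $X := X_1X_3$, $Y := X_4X_5$, $H := X_2YZ$ (checking the hypotheses from the commutator relations, now with $p=3$ so the coefficients $2, -3, 3$ behave differently modulo $3$) should produce a family of characters of degree $q^2$ with central root support $\{\alpha_6\}$. The key new feature over the $p=2$ case is that the relation $[x_2(s), x_3(t)] = x_4(2st)x_5(-3s^2t)x_6(3st^2)$ collapses: the $x_5$ and $x_6$ terms vanish mod $3$, so the commutator structure on the relevant quotient is genuinely different, and this is what forces the count $(q-1)^2$ rather than $q(q-1)$ for the degree-$q^2$ characters.

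Next I would pass to $T_2 := U(q)/X_6$ with centre $X_5$, and again invoke Lemma \ref{lem:rl} to peel off a family of degree-$q$ characters with support $\{\alpha_5\}$; then $T_3 := U(q)/X_5X_6$ with centre $X_4$, noting that now $[x_2(s), x_4(t)] = x_5(3st) = 1$ in the quotient since $3 \equiv 0$, so $X_4$ is genuinely central and the analysis of characters with support $\{\alpha_4\}$ is where a $q/3$-degree family will emerge — this is the analogue of the $\rB_2$ step but with the characteristic-$3$ degeneracy producing the fractional degree $q/3$ and the count $9(q-1)^2/2$. The factor $9/2$ should come from a nontrivial symmetric or alternating form on a $2$-dimensional $\F_3$-space whose radical analysis (the $X'$, $Y'$ of Lemma \ref{lem:rl}) yields a family of $q/3$-characters together with a residual family of ordinary degree-$q$ or linear characters accounting for the $(q-1)(q+3)/2$ term. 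Finally, the bottom quotient $U(q)/X_4X_5X_6$ is abelian of order $q^3$ (the derived subgroup of $U(\rG_2(q))$ in characteristic $3$ is larger than in good characteristic), contributing $q^2$ linear characters after accounting for one further abelian layer.

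The hard part will be the characteristic-$3$ commutator bookkeeping in the middle layers: unlike the $p=2$ case, where $T_3 \cong U(\rB_2(2^f))$ gave an immediate reduction to Proposition \ref{prop:B2}, here the quotients do not reduce to a smaller Chevalley group because the vanishing of the $3$-divisible structure constants changes which root elements are central and which commute. I would need to carefully determine, in the quotient $U(q)/X_5X_6$, the exact bilinear pairing induced by the first commutator relation $[x_1(s), x_2(t)] = x_3(-st)x_4(st^2)\cdots$ restricted modulo the relations — in particular the term $x_4(st^2)$ is quadratic in $t$, not bilinear, so the "form" governing Lemma \ref{lem:rl} is not $\F_q$-bilinear and the set $X'$ or $Y'$ may be a proper nontrivial coset-structure rather than a subgroup, requiring a more delicate application (or a direct Clifford-theoretic argument) than in the $p=2$ case. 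Once all five families $\cF_1, \dots, \cF_5$ are in hand with their sizes and degrees, the completeness check is routine: one computes $\sum_i |\cF_i| \chi_i(1)^2 = (q-1)^2 q^4 + (2(q-1)q^2 + (q-1)(q+3)/2)q^2 + (9(q-1)^2/2)(q/3)^2 + q^2$ and verifies this equals $q^6 = |U(q)|$, whence Equation \eqref{eq:check} forces the union of the families to be all of $\Irr(U(q))$.
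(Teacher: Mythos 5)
Your overall strategy (stratify $\Irr(U(q))$ by which central layers act nontrivially, apply Lemma \ref{lem:rl} repeatedly, then verify completeness with \eqref{eq:check}) is the paper's strategy, and your final sum-of-squares computation is correct. But the middle of your outline mis-locates the key phenomena. First, in characteristic $3$ the centre of $U(q)$ is $X_4X_6$, not $X_6$: the relations $[x_2(s),x_4(t)]=x_5(3st)$ and $[x_3(s),x_4(t)]=x_6(-3st)$ collapse, so $X_4$ is central. The $(q-1)^2$ characters of degree $q^2$ are exactly those nontrivial on \emph{both} $X_4$ and $X_6$ (central root support $\{\alpha_4,\alpha_6\}$), and the second factor $(q-1)$ is the choice of $a_4\in\F_q^\times$; your description (support $\{\alpha_6\}$, count forced by the collapse of $[x_2,x_3]$) does not produce it. Moreover, with your choice $X=X_1X_3$, $Y=X_4X_5$ the hypotheses of Lemma \ref{lem:rl} do hold, but $X'=X_3$ and $Y'=X_4$ are nontrivial (again because $[x_3,x_4]$ vanishes), so the lemma does not hand you a single family of degree-$q^2$ characters: one must further decompose $\Irr(HX'/\tilde Y\ker\lambda\mid\lambda)$, which is where a $q^{1+2}$-type subquotient appears and splits the characters over $\lambda^{a_6}$ into $(q-1)$ of degree $q^2$ and $q^2$ of degree $q$. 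This step is missing from your plan.

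The more serious error is where you place the degree-$q/3$ family. You claim it emerges in $T_3:=U(q)/X_5X_6$ from characters nontrivial on the central $X_4$; but every irreducible character of $U(q)/X_5X_6$ (a group of order $q^4$) has degree $1$ or $q$ — its characters are $q^2$ linear ones, $q(q-1)$ of degree $q$ over $X_4$, and $q-1$ of degree $q$ over $X_3$, and $q^2+q^3(q-1)+q^2(q-1)=q^4$ — so no fractional degrees can arise there. The fractional-degree characters are trivial on $X_4$ and nontrivial on both $X_3$ and $X_5$: they live in the quotient $U(q)/X_4X_6$, whose centre is $X_3X_5$, where the relevant pairing is $\lambda([x_2(t),x_1(s)])=\lambda(x_3(st)x_5(st^3))=\phi\bigl(st(a_3+a_5t^2)\bigr)$. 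Here $X'$ and $Y'$ have order $3$ precisely when $-a_3/a_5$ is a square in $\F_q^\times$, which happens for $(q-1)^2/2$ of the pairs and yields $9(q-1)^2/2$ characters of degree $q/3$ (the $9$ is $|X'|\,|Y'|=3\cdot 3$, the $1/2$ is the square condition, not a form on an $\F_3$-plane); the non-square pairs give $(q-1)^2/2$ characters of degree $q$, which together with the $2(q-1)$ characters having exactly one of $a_3,a_5$ nonzero account for the $(q-1)(q+3)/2$ term. This also contradicts your $T_2$ step: the stratum ``trivial on $X_6$, nontrivial on $X_5$'' contains the $q/3$ characters, so it cannot consist only of degree-$q$ characters with support $\{\alpha_5\}$. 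Finally, $U(q)/X_4X_5X_6$ is not abelian (it is a $q^{1+2}$ group via $[x_1(s),x_2(t)]\equiv x_3(-st)$), although your count of $q^2$ linear characters is correct since $[U(q),U(q)]=X_3X_4X_5X_6$ even for $p=3$. Because the families are mis-assembled, the concluding appeal to \eqref{eq:check} cannot be carried out from your construction as written.
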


\begin{proof} Let $T_1:=U(q)$. We have that $Z(T_1)=X_4X_6$. Let us 
put $Z:=X_6$ and let us define $\lambda^{a_6}$ as usual for $a_6 \in \F_q^\times$. 
Then Lemma \ref{lem:rl} applies with $X:=X_2$, $Y:=X_5$ and $H:=X_1X_3YZ$, and $X'=Y'=1$. 
Let $V_1(q):=X_1X_3X_4X_6$. Then $V_1(q) \cong (X_1X_3X_4) \times X_6=T' \times \F_q,$
where $T'$ is a special group of the form $q^{1+2}$ with $Z(T')=X_4$. One then has 
that $T'$ has $q-1$ irreducible characters of degree $q$ and $q^2$ linear characters. 
Hence we get two families of characters, namely 
$$\cF_1=\{\Ind_{X_{\{1,3,4,6\}}}^{U(q)}\Inf_{X_4X_6}^{X_{\{1,3,4,6\}}} (\lambda^{a_4, a_6}) \mid a_4, a_6 \in \F_q^\times\},$$
which consists of $(q-1)^2$ characters of $U(q)$ of degree $q^2$, and 
$$\cF_2=\{\Ind_{X_{\{1,3,4, 5,6\}}}^{U(q)}\Inf_{X_{\{1,3,6\}}}^{X_{\{1,3,4,5,6\}}} (\lambda^{a_6} \otimes \mu_1 \otimes \mu_3) \mid a_6 \in \F_q^\times, \mu_1 \in \Irr(X_1) \text{ and } \mu_3 \in \Irr(X_3)\},$$
which has $q^2(q-1)$ characters of $U(q)$ of degree $q$. 

Let us now define $T_2:=U(q)/X_6$. Then $X_4 \subseteq Z(T_2)$. We let $Z:=X_4$ and 
$\lambda^{a_4} \in \Irr(Z)$. It is a straightforward check that 
$X:=X_2$, $Y:=X_3$ and $H:=X_1X_5YZ$ satisfy the assumptions of Lemma \ref{lem:rl}. 
Again we have that $X'=Y'=1$. Notice that $V_2(q):=X_1X_4X_5X_6/X_6$ is 
an abelian group. Hence we get a family 
$$\cF_3=\{\Ind_{X_{\{1,3, 4,5,6\}}}^{U(q)}\Inf_{V_2(q)}^{X_{\{1,3, 4,5,6\}}} ( \lambda^{a_4} \otimes \mu_1\otimes \mu_5) \mid a_4 \in \F_q^\times, \mu_1 \in \Irr(X_1) \text{ and } \mu_5 \in \Irr(X_5)\}$$
of $q^2(q-1)$ irreducible characters of $U(q)$ of degree $q$. 

We now let $T_3:=U(q)/X_{\{4, 6\}}$. In this case, we have $Z:=Z(T_3)=X_{\{3, 5\}}$, 
and we define $\lambda^{a_3, a_5} \in \Irr(Z)$ for $a_3, a_5 \in \F_q$ in a similar way as in the case of $T_1$ and $T_2$. 
The groups $X:=X_1$, $Y:=X_2$ and $H:=X_{\{2, 3, 5\}}$ 
satisfy the hypotheses of Lemma \ref{lem:rl}. We now want to compute the sets $X'$ and $Y'$. We have that 
$$\lambda([x_2(t), x_1(s)])=\lambda(x_3(st)x_5(st^3))=\phi(st(a_3+a_5t^2)).$$

Let us first assume that $a_3, a_5 \in \F_q^\times$, and that $-a_3/a_5$ is a square. In this case, we write $a_3^*$ for $a_3$. 
Notice that there are $(q-1)^2/2$ such 
pairs of elements $a_3^*, a_5$ in $\F_q^\times$. 
Namely the set $S$ of squares in 
$\F_q^\times$ is a subgroup of $\F_q^\times$ of order $(q-1)/2$, and $-a_3^*/a_5$ is a square if 
and only if $a_5 \in \F_q^\times$ and $a_3^* \in -a_5S$. Let $\omega_{3, 5}$ be a fixed square root 
of $-a_3^*/a_5$. By Equation \eqref{eq:bijn1}, we have that 
$$X':=\{1, x_1(\pm 1/(a_3\omega_{3, 5}))\} \qquad \text{and} \qquad Y':=\{1, x_2(\pm \omega_{3, 5})\}.$$
In this case we have $[G:HX']=q/3$. Moreover, 
$V_3(q):=ZX'Y' \cong HX'/\tilde Y \ker \lambda$ 
is abelian. By Lemma \ref{lem:rl}, we obtain a family 
$$\cF_4:=\{\chi_{e_1, e_2}
^{a_3^*, a_5} \mid e_1, e_2 \in \F_3, a_5 \in \F_q^\times \text{ and } a_3^* \in -a_5S\},$$ 
where
$$\chi_{e_1, e_2}
^{a_3^*, a_5}:=\Ind_{HX'}^{U(q)} \Inf_{V_3(q)}^{HX'Y}(\lambda^{a_3^*, a_5} \otimes \mu_{X'}^{e_1} \otimes \mu_{Y'}^{e_2}),$$
of $9(q-1)^2/2$ irreducible characters of $U(q)$ of degree $q/3$. 

We now suppose that $a_3, a_5 \in \F_q^\times$, and $a_5/a_3$ is not a square. We write $a_3'$ for $a_3$. In this case, 
we have that $X'=Y'=1$. We put $V_4(q):=H/Y \cong X_3X_5$. We get a family
$$\cF_5:=\{\Ind_{HY}^{U(q)} \Inf_{V_4(q)}^{HY}(\lambda^{a_3', a_5})  \mid a_5 \in \F_q^\times \text{ and } a_3' \in \F_q^\times \setminus-a_5S\}$$
of $(q-1)^2/2$ irreducible characters of $U(q)$ of degree $q$. 

If exactly one of $a_3$ or $a_5$ is nonzero, then we also get $X'=Y'=1$. Let $V_5(q):=X_3X_5$. Then we get a family
$$\cF_6:=\{\Ind_{HY}^{U(q)} \Inf_{V_5(q)}^{HY}(\lambda^{a_3, a_5})  \mid (a_3, a_5) \in (\F_q^\times \times \{0\}) \cup (\{0\} \times \F_q^\times)\}$$
of $2(q-1)$ irreducible characters of degree $q$ of $U(q)$. 
The choice $a_3=a_5=0$ corresponds to the family $\cF_7$ of $q^2$ linear characters of $U(q)$.  

Finally, if $\chi_i$ is any character in $\cF_i$, for $i=1, \dots, 7$, then we have 
\begin{align*}
\sum_{i=1}^{7}\chi_i(1)^2 |\cF_i|&=q^4(q-1)^2+q^2(2q^2(q-1)+(q-1)(q+3)/2)+q^2(q-1)^2/2+q^2=\\
&=q^6=|U(q)|,
\end{align*}
hence $\cF_1 \cup \dots \cup \cF_7=\Irr(U(\rG_2(q)))$. \end{proof}

\begin{table}
\begin{center}
\begin{tabular}{|c|c|c|}
\hline
\multicolumn{3}{|c|}{$\Irr(U(\rG_2(2^f)))$} \\
\hline
\hline
Labels & Size of family & Deg. \\
\hline
\hline
$\chi_{b_2}^{a_6}$ & $q(q-1)$	& $q^2$ \\
\hline 
$\chi_{b_1, b_3}^{a_5}$ & $q^2(q-1)$	& $q$ \\
\hline 
$\chi^{a_3, 0}$ & $q-1$	& $q$ \\
\hline 
$\chi^{0, a_4}$ & $q-1$	& $q$ \\
\hline 
$\chi_{d_1, d_2}^{a_3, a_4}$ & $4(q-1)^2$	& $q/2$ \\
\hline 
$\chi_{b_1, b_2}$ & $q^2$	& $1$ \\
\hline 
\multicolumn{3}{c}{}\\
\multicolumn{3}{c}{}\\
\end{tabular}
\quad\quad\quad\quad\quad
\begin{tabular}{|c|c|c|}
\hline
\multicolumn{3}{|c|}{$\Irr(U(\rG_2(3^f)))$} \\
\hline
\hline
Labels & Size of family & Deg. \\
\hline
\hline
$\chi^{a_4, a_6}$ & $(q-1)^2$	& $q^2$ \\
\hline
$\chi_{b_1, b_3}^{a_6}$ & $q^2(q-1)$	& $q$ \\
\hline
$\chi_{b_1, b_5}^{a_4}$ & $q^2(q-1)$	& $q$ \\
\hline
$\chi^{a_3', a_5}$ & $(q-1)^2/2$	& $q$ \\
\hline
$\chi^{a_3, 0}$ & $q-1$	& $q$ \\
\hline
$\chi^{0, a_5}$ & $q-1$	& $q$ \\
\hline
$\chi_{e_1, e_2}^{a_3^*, a_5}$ & $9(q-1)^2/2$	& $q/3$ \\
\hline
$\chi_{b_1, b_2}$ & $q^2$	& $1$ \\
\hline
\end{tabular}
\end{center}
\caption{A parametrization of $\Irr(U(\rG_2(q)))$ for $p=2$ and $p=3$.}
\label{tab:G2all2}
\end{table}

\section{Characters of fractional degree of $U(q)$ in classical type}

We now focus on the characters of $U(\rB_2(q))$ when $q=2^f$. The family of 
characters of degree $q/2$ in $U(\rB_2(q))$ was obtained  
in \cite[\S7]{Lus03} and revisited in \cite[\S7]{BD06} 
in the context of character sheaves. We construct it here as 
an inflation-induction process from some subquotient of $U(\rB_2(q))$. 

\begin{prop} \label{prop:B2}
Let $q=2^f$. Then there are exactly $4(q-1)^2$ irreducible characters 
of $U(\rB_2(q))$ of degree $q/2$. 
\end{prop}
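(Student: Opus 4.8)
The plan is to compute the structure of $U(\rB_2(q))$ for $q=2^f$ explicitly and then to apply Lemma \ref{lem:rl} to a suitable subquotient, exactly as in the $\rG_2$ arguments above. First I would fix notation: let $\Phi^+$ for $\rB_2$ consist of two simple roots $\alpha_1$ (say long) and $\alpha_2$ (short), together with $\alpha_3=\alpha_1+\alpha_2$ and the highest root $\alpha_4=\alpha_1+2\alpha_2$, so $U:=U(\rB_2(q))=X_1X_2X_3X_4$ has order $q^4$. In characteristic $2$ the Chevalley commutator relations degenerate: I expect $[x_1(s),x_2(t)]=x_3(st)x_4(st^2)$ (structure constants $\pm1,\pm1$ reduce mod $2$), $[x_2(s),x_3(t)]=x_4(st)$ or similar with the coefficient $2$ killed, and all other pairs of root subgroups commuting. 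Thus $Z(U)=X_4$ and $U/X_4$ is abelian; this is the key feature that makes a character of degree $q/2=q/p$ appear, since the commutator map $X_1\times X_2\to X_4$ given by $(s,t)\mapsto st^2$ is Frobenius-twisted and hence not $\F_q$-bilinear — it is only $\F_2$-bilinear after composing with $\phi$.

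Next I would set up the application of Lemma \ref{lem:rl} with $G:=U$, $Z:=X_4=Z(U)$, $\lambda=\lambda^{a_4}\in\Irr(Z)$ for $a_4\in\F_q^\times$ defined in the usual way via $\phi$, and take $X:=X_1$, $Y:=X_2$, $H:=X_2X_3X_4$ (or $H:=X_2X_3Z$). One checks hypotheses (i)–(v): $Z\subseteq Z(G)$, $X$ and $Y$ are elementary abelian of equal order $q$, $Y\trianglelefteq H$, $Z\cap Y=1$, and $[X,Y]\subseteq X_3X_4$ — here I need to be slightly careful because $[X_1,X_2]$ lands in $X_3X_4$, not just in $Z=X_4$, so I would instead first pass to the quotient $\bar U:=U/X_3$... but $X_3$ need not be normal. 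The cleaner route, matching the $\rG_2$ treatment, is: take $H$ large enough that $[X,Y]\subseteq HX'$ after absorbing $X_3$, i.e. work with $H:=X_2X_3X_4$ so that $[X_1,X_2]\subseteq H$ already, and note $[X,Y]$ modulo the relevant kernel lies in $Z$. Concretely: the character $\lambda^{a_4}$ is trivial on $X_3$ (as $X_3\not\subseteq Z(U)$ in general we must check; if $X_3\subseteq\ker$ of the inflated character this is fine), and then $\lambda([x_1(s),x_2(t)])=\lambda(x_4(st^2))=\phi(a_4 st^2)$. So the relevant pairing is $(s,t)\mapsto\phi(a_4 st^2)$.

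Then I would compute $X'$ and $Y'$. We have $X'=\{x_1(s):\phi(a_4st^2)=1\ \forall t\in\F_q\}$. Since $\{t^2:t\in\F_q\}=\F_q$ (Frobenius is bijective), the condition $a_4st^2=0$ for all $t$ forces $s=0$, giving $X'=1$... which would produce degree-$q$ characters, not degree $q/p$. To get the fractional degree I must instead \emph{not} quotient by all of $X_3$: the correct subquotient is $T:=U/X_4'$ for a suitable codimension-one $\F_2$-subspace, or rather one fixes a further linear character on $X_3$ as well. I would reexamine: the characters of degree $q/2$ should come from a linear character $\lambda^{a_3,a_4}$ of $X_3X_4$ with \emph{both} $a_3,a_4\in\F_q^\times$, so that $\lambda([x_1(s),x_2(t)])=\phi(a_3st+a_4st^2)=\phi(st(a_3+a_4t))$; then $Y'=\{x_2(t):t(a_3+a_4t)s=0\ \forall s\}=\{x_2(0),x_2(a_3/a_4)\}$ has order $2$, and symmetrically $X'=\{x_1(0),x_1(s_0)\}$ has order $2$ for the appropriate $s_0$, whence $[G:HX']=q/2$ and, $V:=ZX'Y'$ being abelian, Lemma \ref{lem:rl} yields irreducible characters of degree $q/2$ labelled by $(d_1,d_2)\in\F_2\times\F_2$ together with $(a_3,a_4)\in(\F_q^\times)^2$, giving $4(q-1)^2$ of them.

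Finally, to see these are \emph{all} the degree-$q/2$ characters and that the count is exactly $4(q-1)^2$ (not merely at least), I would run the same exhaustive bookkeeping as in the $\rG_2$ propositions: enumerate all linear characters of $Z(U)=X_4$ (one orbit family of size $q-1$ giving degree-$q$ characters when $a_3=0$, and one family giving the degree-$q/2$ characters when $a_3\neq0$), pass to $U/X_4$ and handle $X_3$, collect the linear characters ($q^2$ of them), and verify $\sum_i|\cF_i|\chi_i(1)^2=q^4=|U|$ via Equation \eqref{eq:check}. The main obstacle I anticipate is pinning down the exact degenerate commutator relations in characteristic $2$ and then correctly identifying which subquotient $T_j$ and which central character $\lambda^{a_3,a_4}$ produce the $q/p$ denominator — that is, recognizing that one must keep a nontrivial character on $X_3$ (not kill it) so that the pairing $(s,t)\mapsto\phi(st(a_3+a_4t))$ has a one-dimensional-over-$\F_2$ (hence index-$p$) radical on each side, rather than a trivial or full radical; once that is set up the bijection of Lemma \ref{lem:rl} and the final sum-of-squares check are routine.
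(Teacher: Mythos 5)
Your final setup coincides with the paper's proof: it takes $Z=Z(U(q))=X_3X_4$ (central precisely because the structure constant in $[x_2(s),x_3(t)]$ is $\pm2$, hence trivial for $p=2$ --- the point you hedged on), $X=X_1$, $Y=X_2$, $H=X_{\{2,3,4\}}$, $\lambda^{a_3,a_4}$ with both $a_3,a_4\in\F_q^\times$, the pairing $\phi(st(a_3+a_4t))$ with $|X'|=|Y'|=2$, and then applies Lemma \ref{lem:rl} to produce the $4(q-1)^2$ characters of degree $q/2$, exactly as the paper does after your self-correction away from the initial (incorrect) guess that $Z(U)=X_4$ with $U/X_4$ abelian. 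The only difference is cosmetic: for the ``exactly'' part the paper cites \cite{Lus03} for the remaining characters (two families of $q-1$ characters of degree $q$ and the $q^2$ linear characters) instead of rerunning the sum-of-squares check \eqref{eq:check}, which your proposed bookkeeping would also deliver.
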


\begin{proof} Since $p=2$, we have 
that $[x_3(s), x_2(t)]=1$. 
Hence $Z:=Z(U(q))=X_{\{3, 4\}}$. 
Let 
$X:=X_1$, $Y:=X_2$ and $H:=X_{\{2, 3, 4\}}$. Then the 
assumptions 
of Lemma \ref{lem:rl} are 
satisfied. Let us fix $a_3, a_4 \in \F_q^\times$, and let  $\lambda=\lambda^{a_3, a_4} \in \Irr(Z)$ be such that 
$\lambda(x_i(t))=\phi(a_it)$ for $i=3, 4$. 
We have that 
$$\lambda([x_1(s), x_2(t)])=\lambda(x_3(st)x_4(st^2))=
\phi(st(a_3+a_4t)),$$
and in the notation of Lemma \ref{lem:rl}, by Equation \eqref{eq:ker} we have 
$$X':=\{1, x_1(a_4/a_3^2)\} \qquad \text{and} \qquad 
Y':=\{1, x_2(a_3/a_4)\}.$$
Notice that $[U(q):HX']=q/2$ and that 
$HX'/\tilde Y \ker \lambda\cong ZX'Y'$. Let us 
define $V(q):=ZX'Y'$. By 
Equation \eqref{eq:bijn1} we have that 
$$\Ind_{HX'}^{U(q)} \Inf_{V(q)}^{HX'}:\Irr(V(q) \mid \lambda) \longrightarrow \Irr(G \mid \lambda)$$
is a bijection. Moreover, we have that 
$V(q)$ is abelian, hence 
$$\Irr(V(q) \mid \lambda^{a_3, a_4})=\{\psi_{d_1, d_2}^{a_3, a_4} 
\mid d_1, d_2 \in \F_2\},$$
where 
$$\psi_{d_1, d_2}^{a_3, a_4}:=\lambda^{a_3, a_4} \otimes 
\mu_{X'}^{d_1} \otimes 
\mu_{Y'}^{d_2}.$$
Notice that the sets $\Irr(G \mid \lambda^{a_3, a_4})$ are disjoint for $a_3, a_4 \in \F_q^\times$. 
 
Finally, recall by \cite[\S7]{Lus03} that the other characters in $\Irr(U(q))$ consist of two families of size $q-1$ of characters of degree $q$, 
namely the irreducible characters with central root support $\{\alpha_3\}$ and $\{\alpha_4\}$ respectively, and the family of 
the $q^2$ linear characters. 
\end{proof}

\begin{figure}[h]
\begin{center}
\begin{tikzpicture}[transform shape, place/.style={circle,draw=black,fill=black, tiny},middlearrow/.style={
    decoration={markings,
      mark=at position 0.6 with
      {\draw (0:0mm) -- +(+135:\DynkinArrowLength); \draw (0:0mm) -- +(-135:\DynkinArrowLength);},
    },
        postaction={decorate}
  },
    middlearrowtwo/.style={
   decoration={markings,
      mark=at position 0.5 with
      {\draw +(-45:\DynkinArrowLength) -- (0:0mm); \draw +(+45:\DynkinArrowLength) -- (0:0mm);},
   },
     postaction={decorate}
  },
 dedge/.style={
    middlearrow,
    double distance=0.5mm,
  },
   dedgetwo/.style={
    middlearrowtwo,
    double distance=0.5mm,
  }]]
    \node(z) at (7, 0)[label=right:\begin{large}$\rB_n$\end{large}]  {}; 
  \node(a) at (-5,0) [circle, draw, thick, fill=none, inner sep=2pt,label=below:$\alpha_1$] {};
  \node (b) at (-3,0) [circle, draw, thick, fill=none, inner sep=2pt,label=below:$\alpha_2$] {};
  \node (c) at (-1,0) [circle, draw, thick, fill=none, inner sep=2pt,label=below:$\alpha_3$] {};
  \node (d) at (0,0) {};
    \node (e) at (2,0) {};
  \node (f) at (3,0) [circle, draw, thick, fill=none, inner sep=2pt,label=below:$\alpha_{n-1}$] {};
    \node (g) at (5,0) [circle, draw, thick, fill=none, inner sep=2pt,label=below:$\alpha_n$] {};
  \draw (a) -- (b);
  \draw (e) -- (f);
    \draw[dashed] (d) -- (e);
  \draw (c) -- (d);
  \draw (b) -- (c) ;
  \path (f) edge[dedge] (g);
\end{tikzpicture}
\end{center}

\begin{center}
\begin{tikzpicture}[transform shape, place/.style={circle,draw=black,fill=black, tiny},middlearrow/.style={
    decoration={markings,
      mark=at position 0.6 with
      {\draw (0:0mm) -- +(+135:\DynkinArrowLength); \draw (0:0mm) -- +(-135:\DynkinArrowLength);},
    },
        postaction={decorate}
  },
    middlearrowtwo/.style={
   decoration={markings,
      mark=at position 0.5 with
      {\draw +(-45:\DynkinArrowLength) -- (0:0mm); \draw +(+45:\DynkinArrowLength) -- (0:0mm);},
   },
     postaction={decorate}
  },
 dedge/.style={
    middlearrow,
    double distance=0.5mm,
  },
   dedgetwo/.style={
    middlearrowtwo,
    double distance=0.5mm,
  }]]

    \node(z) at (7, 0)[label=right:\begin{large}$\rC_n$\end{large}]  {}; 
      \node(a) at (-5,0) [circle, draw, thick, fill=none, inner sep=2pt,label=below:$\alpha_1$] {};
  \node (b) at (-3,0) [circle, draw, thick, fill=none, inner sep=2pt,label=below:$\alpha_2$] {};
  \node (c) at (-1,0) [circle, draw, thick, fill=none, inner sep=2pt,label=below:$\alpha_3$] {};
  \node (d) at (0,0) {};
    \node (e) at (2,0) {};
  \node (f) at (3,0) [circle, draw, thick, fill=none, inner sep=2pt,label=below:$\alpha_{n-1}$] {};
    \node (g) at (5,0) [circle, draw, thick, fill=none, inner sep=2pt,label=below:$\alpha_n$] {};
  \draw (a) -- (b);
  \draw (e) -- (f);
    \draw[dashed] (d) -- (e);
  \draw (c) -- (d);
  \draw (b) -- (c) ;
  \path (f) edge[dedgetwo] (g);
\end{tikzpicture}
\end{center}
\caption{The Dynkin diagrams of $\rB_n$ and $\rC_n$. Simple roots are labelled as in CHEVIE.}
 \label{fig:Dynkin}
\end{figure}
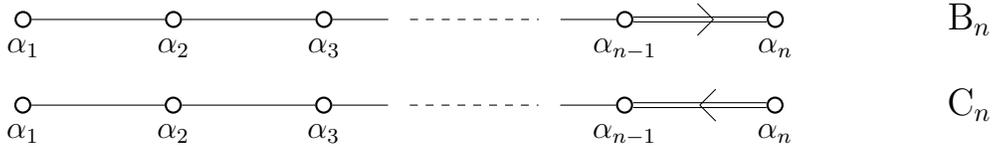

By inflation of the irreducible characters in Proposition \ref{prop:B2} 
we obtain the following. 

\begin{cor} \label{co:BC} Let $G(q)=\rB_n(q)$ or $G(q)=\rC_n(q)$. Then 
$U(q)$ has at least $4(q-1)^2$ characters of degree $q/2$.
\end{cor}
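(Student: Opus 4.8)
The plan is to use the standard fact that $\rB_2$ embeds as a Levi-type subsystem (more precisely, that the root system of $\rB_2$ occurs as a ``closed subset'' of the positive roots of $\rB_n$ and of $\rC_n$), so that $U(\rB_2(q))$ appears as a quotient of $U(\rB_n(q))$ (respectively $U(\rC_n(q))$) by a normal pattern subgroup, and then to pull back the $4(q-1)^2$ characters of degree $q/2$ produced in Proposition \ref{prop:B2} via inflation. Inflation preserves irreducibility and character degree, and distinct characters of the quotient inflate to distinct characters of the whole group, so this immediately yields at least $4(q-1)^2$ irreducible characters of $U(q)$ of degree $q/2$.

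Concretely, for $\rB_n$ I would take the two simple roots $\alpha_{n-1}$ (long) and $\alpha_n$ (short) at the short end of the Dynkin diagram (see Figure \ref{fig:Dynkin}); the positive roots they generate, namely $\alpha_{n-1}$, $\alpha_n$, $\alpha_{n-1}+\alpha_n$, $\alpha_{n-1}+2\alpha_n$, form a subsystem of type $\rB_2$. Let $\cP$ be this set of four positive roots and let $\cN=\Phi^+\setminus\cP$. The key point is that $\cN$ is a normal subset of the pattern $\Phi^+$: if $\beta\in\cN$ and $\gamma\in\Phi^+$ and $\beta+\gamma\in\Phi^+$, then $\beta+\gamma$ still lies outside the $\rB_2$-subsystem, since that subsystem is closed under the relevant additions within itself and any root of $\Phi^+$ involving a simple root other than $\alpha_{n-1},\alpha_n$ cannot be cancelled by adding roots that involve only $\alpha_{n-1},\alpha_n$. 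Hence $X_\cN\teq U(q)$ and $U(q)/X_\cN\cong X_\cP\cong U(\rB_2(q))$. The same argument works for $\rC_n$ with $\alpha_{n-1}$ (short) and $\alpha_n$ (long) generating a $\rB_2$-subsystem (the diagram $\rC_2$ coincides with $\rB_2$ up to relabelling), which is exactly why the character labels in Table \ref{tab:badchar} for $\rC_n$ swap the roles of $d_{n-1}$ and $d_n$.

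Given this, the proof is short: for $G(q)=\rB_n(q)$ (resp.\ $\rC_n(q)$), set $\bar U:=U(q)/X_\cN\cong U(\rB_2(q))$; by Proposition \ref{prop:B2} there are $4(q-1)^2$ characters $\chi\in\Irr(\bar U)$ with $\chi(1)=q/2$; then $\Inf_{\bar U}^{U(q)}\chi\in\Irr(U(q))$ has degree $q/2$, and the map $\chi\mapsto\Inf_{\bar U}^{U(q)}\chi$ is injective, giving at least $4(q-1)^2$ such characters of $U(q)$.

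The only genuine obstacle is verifying that $\cN=\Phi^+\setminus\cP$ is indeed a normal pattern for every $n$ and uniformly for both $\rB_n$ and $\rC_n$, i.e.\ checking the closure condition root-combinatorially; but this is a routine finite check once the $\rB_2$-subsystem is identified inside the Dynkin diagram, and it is the kind of verification already used tacitly throughout the paper when passing to the subquotients $T_i$. I would state Corollary \ref{co:BC} as above, indicate the embedding of the $\rB_2$ root subsystem, note that $X_{\Phi^+\setminus\cP}$ is normal in $U(q)$ with quotient isomorphic to $U(\rB_2(q))$, and conclude by inflating the characters of Proposition \ref{prop:B2}.
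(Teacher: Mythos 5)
Your proposal is correct and is essentially the paper's own argument: your $\cN=\Phi^+\setminus\cP$ is exactly the paper's $\cN=\bigcup_{i=1}^{n-2}\{\alpha\in\Phi^+\mid\alpha_i\le\alpha\}$, the quotient $U(q)/X_{\cN}\cong X_{\{n-1,n,2n-1,3n-2\}}\cong U(\rB_2(q))$ is the same, and the conclusion is obtained by inflating the $4(q-1)^2$ characters of Proposition \ref{prop:B2} just as in the paper (including the swap of the roles of $\alpha_{n-1}$ and $\alpha_n$ in type $\rC_n$, where the short-long pattern makes the relevant root $2\alpha_{n-1}+\alpha_n$ rather than $\alpha_{n-1}+2\alpha_n$).
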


\begin{proof} Let us define 
$$\cN:= \bigcup_{i=1}^{n-2}\{\alpha \in \Phi^+ \mid \alpha_i \le \alpha\}.$$
We have that $\cN \teq \Phi^+$, hence $X_{\cN} \teq U(q)$, and $U(q)/X_{\cN} \cong X_{\{n-1, n, 2n-1, 3n-2\}}$ 
is isomorphic to $U(\rB_2(q))$, with $\alpha_{n-1}$ long and $\alpha_n$ short in type 
$\rB_n$ and viceversa in type $\rC_n$, in the notation of Figure \ref{fig:Dynkin}. If $a_{2n-1}, a_{3n-2} \in \F_q^\times$ and 
$d_{n-1}, d_n \in \F_2$, we then define 
$$\chi_{d_{n-1}, d_n}^{a_{2n-1}, a_{3n-2}}:=\Inf_{U(q)/X_{\cN}}^{U(q)} \psi_{d_{n-1}, d_n}^{a_{2n-1}, a_{3n-2}}$$
if $U(q)=U(\rB_2(q))$, and 
$$\chi_{d_n, d_{n-1}}^{a_{2n-1}, a_{3n-2}}:=\Inf_{U(q)/X_{\cN}}^{U(q)} \psi_{d_n, d_{n-1}}^{a_{2n-1}, a_{3n-2}}$$
if $U(q)=U(\rC_2(q))$, where $\psi_{d_{n-1}, d_n}^{a_{2n-1}, a_{3n-2}}$ and 
$\psi_{d_n, d_{n-1}}^{a_{2n-1}, a_{3n-2}}$ are defined as in Proposition \ref{prop:B2}. 
\end{proof}

Let us now examine the groups of type $\rD_4$. The irreducible characters 
of $U(\rD_4(q))$ have been completely parametrized in \cite{HLM11} 
for every prime $p$. Unlike the case of type $\rB_n$ and $\rC_n$, there 
are no characters of degree $q/2$ in type $\rD_4$. 
The statement and the construction below combine the study of 
the family $\cF_{8, 9, 10}^{\text{even}}$ in \cite{HLM11} and the approach 
of \cite{GLMP16}. 

\begin{prop}\label{prop:D4char} Let $G(q)=\rD_4(q)$. Then $U(q)$ has exactly $4(q-1)^4$ irreducible 
characters of degree $q^3/2$. 
\end{prop}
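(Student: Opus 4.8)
The plan is to mimic the strategy used throughout Section~3: exhibit explicit families of irreducible characters of $U(\rD_4(q))$ built via the inflation-induction machinery of Lemma~\ref{lem:rl}, and then verify by a direct application of Equation~\eqref{eq:check} that the family of size $4(q-1)^4$ of characters of degree $q^3/2$ is exactly right (and in particular nonempty). First I would fix the labelling of the $12$ positive roots of $\rD_4$ as in CHEVIE, with $\alpha_1,\dots,\alpha_4$ the simple roots ($\alpha_2$ the central node of the Dynkin diagram), and write down the commutator relations among the root subgroups. The key structural point is that, since $p=2$, a number of the structure constants that would normally be $\pm 2$ vanish, so several commutators that are nontrivial for odd $p$ become trivial; this enlarges the centre of the relevant subquotients and is precisely what forces the degree $q^3/2$ to appear. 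I would compute $Z(U(q))$ and, more importantly, the chain of subquotients $T_1:=U(q)$, $T_2:=U(q)/N_1$, $\dots$ obtained by successively quotienting out normal pattern subgroups $X_{\cN}$, exactly as in the $\rG_2$ propositions.

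Next, for each subquotient $T_k$ I would identify a central root subgroup $Z=X_{\cN_k}$, a linear character $\lambda^{\ua}\in\Irr(Z)$, and pattern subgroups $X$, $Y$, $H$ satisfying the five hypotheses of Lemma~\ref{lem:rl}; the commutator relations make these verifications routine once the right roots are chosen. For the ``generic'' subquotients this will produce families of characters of degree $q^3$, $q^2$, $q$ with $X'=Y'=1$, and for linear characters the trivial family of size $q^2$. The crucial family is the one coming from the subquotient in which the relevant commutator pairing $\lambda([x,y])$ degenerates: there, writing the pairing explicitly as a function of the parameters $a_{8,9,10}$ (the three roots at ``level'' giving the triple central support, in the notation of Table~\ref{tab:badchar}) and $a_{n+1,n+2,n+3}$, one gets $|X'|=|Y'|=2$ because of the square-root phenomenon in characteristic $2$ (cf. Equation~\eqref{eq:ker}), hence $[U(q):HX']=q^3/2$, and $HX'/\tilde Y\ker\lambda\cong ZX'Y'$ is abelian. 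Each such abelian quotient contributes $|\F_2|^2=4$ linear characters lying over $\lambda$, giving $4$ characters of degree $q^3/2$ per admissible parameter tuple; counting the admissible tuples gives the factor $(q-1)^4$, so the family $\cF=\{\chi_{d_{1,2,4},d_3}^{a_{5,6,7},a_8,a_9,a_{10}}\}$ (relabelled to match the paper's conventions) has size $4(q-1)^4$. I would define these characters by the explicit formula $\Ind_{HX'}^{U(q)}\Inf_{V(q)}^{HX'}(\lambda\otimes\mu_{X'}^{d}\otimes\mu_{Y'}^{d'})$ with $V(q):=ZX'Y'$, in parallel with Proposition~\ref{prop:B2}.

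Finally I would assemble all the families $\cF_1,\dots,\cF_m$ and check $\sum_i |\cF_i|\,\chi_i(1)^2 = q^{12}=|U(\rD_4(q))|$, which by Equation~\eqref{eq:check} simultaneously proves that the list is complete and that no two families overlap; since the target identity is forced and the family of degree $q^3/2$ has size $4(q-1)^4>0$, this establishes the proposition. To keep the argument honest I would also note, as in \cite{HLM11}, the disjointness of the sets $\Irr(U(q)\mid\lambda^{\ua})$ as $\ua$ ranges over the admissible tuples, so that the count of $\cF$ is exact rather than an upper bound. The main obstacle I anticipate is bookkeeping: correctly pinning down which roots of $\rD_4$ play the roles of $X$, $Y$, $Z$ in each subquotient so that all five hypotheses of Lemma~\ref{lem:rl} genuinely hold (normality of $Y$ in $H$ and $[X,Y]\subseteq Z$ are the delicate ones), and making sure the parameter constraints defining the admissible tuples for the degenerate family are exactly those that yield $|X'|=|Y'|=2$ — get the degeneracy locus slightly wrong and the final sum will not telescope to $q^{12}$. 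Everything else is a direct transcription of the $\rG_2$ and $\rB_2$ arguments already carried out above.
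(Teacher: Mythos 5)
Your plan is genuinely different from what the paper does: the paper does not reprove the parametrization of $\Irr(U(\rD_4(2^f)))$ at all — the word ``exactly'' in Proposition \ref{prop:D4char}, and the closing claim that no other characters of degree $q^n/2$ exist, are quoted from \cite{HLM11} — and the new content is only the explicit inflation--induction construction of the family $\cF$ (recasting the family $\cF_{8,9,10}^{\text{even}}$ of \cite{HLM11} in the style of \cite{GLMP16}). Redoing the full parametrization in the $\rG_2$ style and closing with Equation \eqref{eq:check} is viable in principle, but that is essentially the content of an entire separate paper: to make $\sum_i|\cF_i|\chi_i(1)^2=q^{12}$ telescope you must list \emph{every} family, and your anticipated degree list $q^3,q^2,q,1$ is already incomplete — for instance the characters lying over a nontrivial character of $Z(U(q))=X_{12}$ still have degree $q^4$ when $p=2$, since the four pairs $(\alpha_3,\alpha_{11}),(\alpha_5,\alpha_{10}),(\alpha_6,\alpha_9),(\alpha_7,\alpha_8)$ (branch node labelled $\alpha_3$ as in the paper) carry structure constants $\pm1$; there are also the many families with central root support of size one or two. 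None of this ``bookkeeping'' is carried out or even enumerated in the proposal, so the completeness check, which is the whole point of your route, is not available.

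More concretely, the mechanism you describe for the crucial family would fail as stated. Work in $U(q)/X_{\{11,12\}}$ with $Z=X_{\{8,9,10\}}$, $X=X_1X_2X_4$, $Y=X_5X_6X_7$, $H=X_3YZ$, and $\lambda=\lambda^{a_8,a_9,a_{10}}$. The $3\times3$ Gram matrix of the pairing $\lambda([x,y])$ has zero diagonal and determinant $\pm2a_8a_9a_{10}$, so for $p=2$ it is singular with a one-dimensional radical: in the notation of Lemma \ref{lem:rl} one gets $|X'|=|Y'|=q$, spanned by the diagonal elements $x_{1,2,4}(t)$ and $x_{5,6,7}(s)$ of the paper's construction, not $|X'|=|Y'|=2$, and a single application of the lemma only lowers the index to $q^2$, leaving the non-abelian group $HX'/\tilde Y\ker\lambda$ still to be analysed. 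The order-$2$ groups $W_1,W_2$, the extra parameter $a_{5,6,7}$ on the diagonal of $Y$, and the final degree $q^3/2$ only appear at this second stage, where the pairing of $X'$ against the branch-node subgroup $X_3$ is of the shape $\phi\bigl(a_{5,6,7}tu+a_8a_9a_{10}t^2u\bigr)$ and Equation \eqref{eq:ker} produces kernels of order $2$, exactly as in Proposition \ref{prop:B2}. Your sentence ``$|X'|=|Y'|=2$, hence $[U(q):HX']=q^3/2$'' conflates these two stages, and getting the admissible parameter tuples and the count $4(q-1)^4$ right requires precisely this iterated analysis (or, as the paper does, citing \cite{HLM11} for exactness). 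As written, the proposal assumes the hard part rather than proving it.
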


\noindent
\emph{Construction.} Let $\cN:=X_{\{11, 12\}}$. Notice that $\cN \teq \Phi^+$. For fixed 
$a_8, a_9, a_{10} \in \F_q^\times$, define 
$$x_{1, 2, 4}(t):=x_1(a_{10}t)x_2(a_9t)x_4(a_8t) \quad \text{and} \quad x_{5, 6, 7}(s):=x_5(a_{10}s)x_6(a_9s)x_7(a_8s)$$
for every $s, t \in \F_q$. Let
$$X':=\{x_{1, 2, 4}(t) \mid t \in \F_q\} \qquad \text{and} \qquad Y':=\{x_{5, 6, 7}(s) \mid s \in \F_q\},$$
and let $\tilde{Y}:=X_5X_6$. Then $Y = Y' \times \tilde{Y}$.

Let us also fix $a_{5,6,7} \in \F_q^\times$. We define $\lambda=\lambda^{a_{5, 6, 7}, a_8, a_9, a_{10}}$ by $\lambda(x_{5, 6, 7}(s))=\phi(a_{5,6,7}s)$, and 
$\lambda(x_i(t_i))=\phi(a_it_i)$ for $i=8, 9, 10$, and we define 
$$W_1:=\{1, x_{1,2,4}(a_{5,6,7}/(a_8a_9a_{10})\} \quad \text{and} \quad W_2:=\{1, x_3(a_8a_9a_{10}/a_{5,6,7}^2)\}$$
and $V(q):=W_1W_2YZ/(\tilde{Y}\ker \lambda)$. Then each character of
$$\cF:=\{\psi_{d_{1,2,4}, d_3}^{a_{5, 6, 7}, a_8, a_9, a_{10}} \mid a_{5, 6, 7}, a_8, a_9, a_{10} \in \F_q^\times, 
d_{1,2,4}, d_3 \in \F_2\},$$
where
$$\psi_{d_{1,2,4}, d_3}^{a_{5, 6, 7}, a_8, a_9, a_{10}}:=\Ind_{X'W_2YX_{\{8, \dots, 12\}}}^{U(q)}
\Inf_{V(q)}^{X'W_2YX_{\{8, \dots, 12\}}}(\lambda^{a_{5, 6, 7}, a_8, a_9, a_{10}} \otimes \mu_{W_1}^{d_{1,2,4}}
 \otimes \mu_{W_2}^{d_3}),$$
is irreducible in $U(q)$ of degree $q^3/2$. The characters $\psi_{d_{1,2,4}, d_3}^{a_{5, 6, 7}, a_8, a_9, a_{10}}$ are all distinct. 

Finally, by \cite{HLM11}, there are no other characters in $\Irr(U(q))$ of degree $q^n/2$ for any $n \ge 0$. 
\hfill\ensuremath{\square}

\vspace{2mm}

As done in the case of type $\rB_n$ and $\rC_n$, we obtain by inflation 
characters of degree $q^3/2$ in type $\rD_n$ for every $n \ge 5$.

\begin{cor} For $n \ge 5$, the group $U(\rD_n(q))$ has $4(q-1)^4$ irreducible 
characters of degree $q^3/2$. 
\end{cor}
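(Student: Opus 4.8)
The plan is to mimic the argument used for type $\rB_n$ and $\rC_n$ in Corollary \ref{co:BC}, namely to exhibit an appropriate normal subset $\cN$ of $\Phi^+(\rD_n)$ whose corresponding normal subgroup $X_{\cN} \teq U(\rD_n(q))$ has the property that the quotient $U(\rD_n(q))/X_{\cN}$ is isomorphic to $U(\rD_4(q))$, and then to inflate the $4(q-1)^4$ characters of degree $q^3/2$ produced in Proposition \ref{prop:D4char}. First I would fix the standard embedding of the $\rD_4$ subsystem of $\rD_n$ on the ``branching'' end of the Dynkin diagram (the two short arms together with the two nodes adjacent to the trivalent node), so that the four simple roots of the $\rD_4$ subsystem are among $\{\alpha_1, \dots, \alpha_n\}$; call this subsystem $\Psi$.

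Next I would define $\cN$ to be the set of positive roots of $\rD_n$ whose support (in terms of simple roots) meets the complement of the $\rD_4$-subdiagram, i.e.\ roughly $\cN := \bigcup_{i}\{\alpha \in \Phi^+ \mid \alpha_i \le \alpha\}$ where $i$ ranges over the simple roots not in $\Psi$, exactly as in the proof of Corollary \ref{co:BC}. The key verifications are then: (a) $\cN$ is normal in $\Phi^+$, so that $X_{\cN} \teq U(q)$ and the quotient is a pattern group; (b) the complementary set $\Phi^+ \setminus \cN$ is precisely $\Psi^+$, the set of positive roots in the $\rD_4$ subsystem, and it is a pattern; and (c) the Chevalley commutator relations among the root subgroups $X_\alpha$ with $\alpha \in \Psi^+$, read modulo $X_{\cN}$, coincide with those of $U(\rD_4(q))$, so that $U(q)/X_{\cN} \cong U(\rD_4(q))$. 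Claim (a) is immediate from the definition of $\cN$ together with the fact that adding a root in $\cN$ to any positive root either leaves the support meeting the complement or exits $\Phi^+$; claim (b) is a combinatorial statement about root systems of type $\rD$; and claim (c) follows because structure constants in a subsystem subgroup agree with the ambient ones, while all ``extra'' terms lie in $X_{\cN}$ and hence vanish in the quotient.

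Having established the isomorphism $U(q)/X_{\cN} \cong U(\rD_4(q))$, the construction then proceeds exactly as in Corollary \ref{co:BC}: for $a_{5,6,7}, a_8, a_9, a_{10} \in \F_q^\times$ and $d_{1,2,4}, d_3 \in \F_2$, define
$$\chi_{d_{1,2,4}, d_3}^{a_{5,6,7}, a_8, a_9, a_{10}} := \Inf_{U(q)/X_{\cN}}^{U(q)} \psi_{d_{1,2,4}, d_3}^{a_{5,6,7}, a_8, a_9, a_{10}},$$
where the $\psi$'s are the characters of $U(\rD_4(q))$ of degree $q^3/2$ from Proposition \ref{prop:D4char}. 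Inflation preserves irreducibility and character degree, and inflation along a surjection is injective on $\Irr$, so we obtain $4(q-1)^4$ pairwise distinct irreducible characters of $U(\rD_n(q))$, each of degree $q^3/2$.

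The main obstacle I anticipate is purely bookkeeping: pinning down the enumeration of the positive roots of $\rD_n$ in the CHEVIE convention so that the indices appearing in the label $\chi_{d_{1,2,4}, d_3}^{a_{n+1,n+2,n+3}, a_{2n}, a_{2n+1}, a_{2n+2}}$ of Table \ref{tab:badchar} genuinely correspond, under the isomorphism, to the roots indexed $5,6,7,8,9,10$ in the $\rD_4$ picture of Proposition \ref{prop:D4char}. This is a relabelling issue rather than a mathematical one, and it can be handled exactly as the analogous renumbering was handled in Corollary \ref{co:BC} for types $\rB_n$ and $\rC_n$; I would simply record the correspondence and suppress the verification, since it is routine. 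No new ideas beyond those in Corollary \ref{co:BC} and Proposition \ref{prop:D4char} are needed.
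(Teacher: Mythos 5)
Your proposal is correct and follows essentially the same route as the paper: take $\cN:=\bigcup_{i=5}^{n}\{\alpha \in \Phi^+ \mid \alpha_i \le \alpha\}$ (the roots supported outside the $\rD_4$-subdiagram at the forked end), note $X_{\cN} \teq U(q)$ with $U(q)/X_{\cN} \cong U(\rD_4(q))$, and inflate the $4(q-1)^4$ characters of degree $q^3/2$ from Proposition \ref{prop:D4char}, the remaining work being exactly the root-relabelling bookkeeping you describe. The paper simply makes that bookkeeping explicit by listing the surviving root indices $\cS=\{1,2,3,4,n+1,n+2,n+3,2n,2n+1,2n+2,3n-1,4n-3\}$ and the resulting character labels.
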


\begin{proof} In a similar way as in Corollary \ref{co:BC}, we have that 
$$\cN:= \bigcup_{i=5}^{n}\{\alpha \in \Phi^+ \mid \alpha_i \le \alpha\} \teq \Phi^+.$$
Then we have that 
$$U(q)/X_{\cN}=X_{\cS} \cong U(\rD_4(q)),$$ 
where $\cS:=\{1, 2, 3, 4, n+1, n+2, n+3, 2n, 2n+1, 2n+2, 3n-1, 4n-3\}$. 
We then apply Proposition \ref{prop:D4char}, namely if $a_{n+1, n+2, n+3}, a_{2n}, a_{2n+1}, a_{2n+2} \in \F_q^\times$ and 
$d_{1, 2, 4}, d_3 \in \F_2$, the characters as in the claim are given by 
$$\chi_{d_{1,3,4}, d_2}^{a_{n+1, n+2, n+3}, a_{2n}, a_{2n+1}, a_{2n+2}}:=
\Inf_{U(q)/X_{\cN}}^{U(q)}\psi_{d_{1,3,4}, d_2}^{a_{n+1, n+2, n+3}, a_{2n}, a_{2n+1}, a_{2n+2}},$$
where each of the $\psi_{d_{1,3,4}, d_2}^{a_{n+1, n+2, n+3}, a_{2n}, a_{2n+1}, a_{2n+2}}$ is defined as in 
Proposition \ref{prop:D4char}. \end{proof}

\section{Characters of fractional degree of $U(q)$ in types $\rF_4$ and $\rE_k$}

We now move on to character degrees of the form 
$q^n/p$ in type $\rF_4$. We first consider 
the case of the prime $p=2$. We have 
$$\cN:= \{\alpha \in \Phi^+ \mid \alpha_1 \le \alpha\} \cup \{\alpha \in \Phi^+ \mid \alpha_4 \le \alpha\} \teq \Phi^+,$$
and $U(q)/X_{\cN} \cong 
U(\rB_2(q))$. Hence we obtain characters of degrees $q/2$ 
in $U(\rF_4(q))$. 
For $p=3$, we have the following explicit construction. 

\begin{prop}[\cite{GLMP16}, $\S 4.3$] Let $q=3^f$, and let $G(q)=\rF_4(q)$. There exist exactly 
$9(q-1)^4/2$ irreducible characters of $U(q)$ of degree $q^4/3$.
\end{prop}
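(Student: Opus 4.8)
The plan is to follow the same inflation–induction template already used for $\rB_2$, $\rG_2$ and $\rD_4$, but now working inside the root system of $\rF_4$ with $p=3$. First I would fix the enumeration of $\Phi^+(\rF_4)$ as in CHEVIE and write down the Chevalley commutator relations among root elements modulo $3$; the structure constants $\pm 2, \pm 3$ appearing in $\rF_4$ collapse, so that many commutators that are nontrivial for large $p$ become trivial, and this is precisely what creates room for a character of degree $q^4/3$ rather than $q^4$. Next I would identify a suitable normal pattern $\cN \teq \Phi^+$ and pass to the quotient $T:=U(q)/X_{\cN}$, choosing $\cN$ so that the remaining roots carry the relevant degree-$q^4/3$ behaviour; the centre $Z:=Z(T)$ will be spanned by a short chain of root subgroups, and I would pick a linear character $\lambda = \lambda^{a_{11}, a_{12}, a_{13}, a_6^*}$ of $Z$ with the $a$'s in $\F_q^\times$, writing $a_6^*$ with a star to record (as in the $\rG_2$ case) a squareness/cube condition forced by the commutator pairing.

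The heart of the argument is then an application of Lemma \ref{lem:rl}: I would choose elementary abelian subgroups $X$ and $Y$ of $T$ with $|X|=|Y|$, $Y \trianglelefteq H$, $Z \cap Y = 1$ and $[X,Y] \subseteq Z$, mimicking the "diagonal" subgroups $x_{1,3,4,7}(t)$ and their partners that appear in the label $\chi_{e_{1,3,4,7},e_2}^{\dots}$. Evaluating $\lambda$ on the commutators $[x,y]$ gives a polynomial identity of the form $\phi(st(\text{something of degree }3\text{ in }t))$, so the sets $X'$ and $Y'$ in Lemma \ref{lem:rl} each have size exactly $p=3$ (this is where the cube condition on $-a_6^*/(\text{rest})$ enters, explaining the factor $(q-1)^4/2$ rather than $(q-1)^4$ — only half of the $a$-tuples give this behaviour, the square/cube set $S$ having index $2$ in $\F_q^\times$ since $3^f \equiv 1$ or $3 \bmod 4$ needs a brief check; in fact for $p=3$ the relevant subgroup is the squares). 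Since $[T:HX'] = q^4/3$ and $HX'/\widetilde Y\ker\lambda \cong ZX'Y'$ is abelian, Lemma \ref{lem:rl} turns the $9$ linear characters $\lambda \otimes \mu_{X'}^{e_1}\otimes\mu_{Y'}^{e_2}$, $e_1,e_2 \in \Z/3\Z$, into $9$ irreducible characters of $T$ of degree $q^4/3$, one for each of the $(q-1)^4/2$ admissible parameter tuples, giving $9(q-1)^4/2$ characters; inflating along $U(q) \twoheadrightarrow T$ preserves irreducibility and degree.

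To get the exact count "exactly $9(q-1)^4/2$" rather than "at least", I would cite \cite{GLMP16}, $\S4.3$, where the full parametrization of $\Irr(U(\rF_4(3^f)))$ is carried out and where it is verified via \eqref{eq:check} that these are all the characters of degree $q^4/3$; alternatively one checks that the sets $\Irr(T \mid \lambda^{a_{11},a_{12},a_{13},a_6^*})$ are pairwise disjoint as the parameters vary (the central root support $\rs$ together with the values of $\lambda$ on $Z$ separates them) and that no character outside these families can have $3$-power-fractional degree, again by the sum-of-squares bookkeeping. The main obstacle I anticipate is purely organisational rather than conceptual: getting the CHEVIE root enumeration and the mod-$3$ commutator relations exactly right in $\rF_4$ so that the chosen $X$, $Y$, $H$, $Z$ genuinely satisfy all five hypotheses of Lemma \ref{lem:rl} simultaneously, and correctly isolating the squareness condition that produces the $1/2$ in $9(q-1)^4/2$; once the right $\cN$ and the right diagonal subgroups are pinned down, the rest is the now-standard application of Lemma \ref{lem:rl} together with the degree and disjointness bookkeeping, exactly as in the $\rB_2$, $\rG_2$ and $\rD_4$ cases above.
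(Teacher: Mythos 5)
Your proposal takes essentially the same route as the paper: the paper, too, only records the inflation--induction construction (quotient by a normal pattern $\cN$, a linear character with the condition that $a_6^*/a_{11}a_{12}a_{13}$ be a square producing the factor $1/2$, two order-$3$ subgroups giving the $9$, and inflation to $U(q)$), and it delegates both the detailed verification and the exactness of the count $9(q-1)^4/2$ to \cite{GLMP16}, \S 4.3, exactly as you do. The one place where your sketch is more optimistic than the actual construction is that the reduction is not a single application of Lemma \ref{lem:rl} with $|X'|=|Y'|=3$: one needs the twisted diagonal subgroup $\{x_{1,3,4,7}(t)\}$ of order $q$ (with a quadratic correction in $x_7$) and an abelian subquotient $V(q)$ containing also $X_6$ and $Y=X_{\{5,8,9,10\}}$, the order-$3$ subgroups $W_1, W_2$ emerging only after this further reduction --- but these are precisely the organisational details you explicitly defer to \cite{GLMP16}.
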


\noindent
\emph{Construction.} Let $\cN:=\{\alpha_{14}, \dots, \alpha_{24}\}$. Then $\cN \teq \Phi^+$. For $a_{11}, 
a_{12}, a_{13} \in \F_q^\times$, let 
$$x_{1, 3, 4, 7}(t):=x_1(a_{13}t)x_1(a_{13}t)x_3(a_{-12}t)x_4(a_{11}t)
x_7(-a_{11}a_{12}t^2)$$
for every $t \in \F_q$. Moreover, let $S$ be the set of squares in $\F_q^\times$, and for every 
$a_6^* \in \F_q^\times$ such that $a_6^*/a_{11}a_{12}a_{13} \in S$ 
let $e$ be a square root of $a_6^*/a_{11}a_{12}a_{13}$, and let 
$$X':=\{x_{1, 3, 4, 7}(t) \mid t \in \F_q\}$$ 
and 
$$W_1:=\{x_{1, 3, 4, 7}(es) \mid s \in \F_3\}, \qquad 
W_2:=\{x_2(t/(a_{11}a_{12}^2a_{13}e^3)) \mid t \in \F_3\}.$$

Let $\lambda:=\lambda^{a_{11},a_{12}, a_{13}, a_6^*}$ 
be defined as $\lambda(x_i(t))=\phi(a_it)$ for $i=11, 12, 13$ 
and $ \lambda(x_6(t))=\phi(a_6^*t)$. Let $Y:=X_{\{5, 8, 9, 10\}}$, and let 
$V(q):=W_1W_2X_6YZ/\ker(\lambda)$. 
Then we have that
$$\cF:=\{\chi_{e_{1, 3, 4, 7}, e_2}^{a_{11}, a_{12}, a_{13}, a_6^* } \mid a_{11}, a_{12}, a_{13} \in \F_q^\times, e_{1, 3, 4, 7}, e_2 \in \F_3 \text{ and } 
a_6^*/a_{11}a_{12}a_{13} \in S\},$$
where
$$\chi_{e_{1, 3, 4, 7}, e_2}^{a_{11}, a_{12}, a_{13}, a_6^* }:=\Ind_{X'W_2X_6YZX_{\cN}}^{U(q)} \Inf_{V(q)}^{X'W_2X_6YZX_{\cN}}(\lambda^{a_{11},a_{12}, a_{13}, a_6^*}
\otimes \mu_{W_1}^{e_{1, 3, 4, 7}} \otimes\mu_{W_2}^{e_2}
),$$
is a family of $9(q-1)^4/2$ irreducible characters of $U(q)$ of degree $q^4/3$. 

By \cite[Section 4]{GLMP16}, this family consists of all irreducible characters of $U(q)$ 
of degree $q^n/3$ for some $n \ge 0$. \hfill\ensuremath{\square}

\vspace{2mm}

\begin{figure}[h]
\begin{center}
\begin{tikzpicture}[transform shape, place/.style={circle,draw=black,fill=black, tiny},middlearrow/.style={
    decoration={markings,
      mark=at position 0.6 with
      {\draw (0:0mm) -- +(+135:\DynkinArrowLength); \draw (0:0mm) -- +(-135:\DynkinArrowLength);},
    },
    postaction={decorate}
  }, dedge/.style={
    middlearrow,
    double distance=0.5mm,
  }]
  \node (a) at (-4,0) [circle, draw, thick, fill=none, inner sep=2pt,label=below:$\alpha_1$] {};
  \node (b) at (0,2) [circle, draw, thick, fill=none, inner sep=2pt,label=above:$\alpha_2$] {};
    \node (c) at (-2,0) [circle, draw, thick, fill=none, inner sep=2pt,label=below:$\alpha_3$] {};
  \node(d) at (0,0) [circle, draw, thick, fill=none, inner sep=2pt,label=below:$\alpha_4$] {};
  \node(e) at (2,0) [circle, draw, thick, fill=none, inner sep=2pt,label=below:$\alpha_5$]  {};
  \node(f) at (4,0) [circle, draw, thick, fill=none, inner sep=2pt,label=below:$\alpha_6$]  {};
  \node(g) at (6,0) [circle, draw, thick, fill=none, inner sep=2pt,label=below:$\alpha_7$] {};
  \node(h) at (8,0) [circle, draw, thick, fill=none, inner sep=2pt,label=below:$\alpha_8$] {};
  \draw (a) -- (c);
  \draw (b) -- (d) ;
  \draw (c) -- (d) ;
    \draw (d) -- (e) ;
      \draw (e) -- (f) ;
        \draw[dashed] (f) -- (g) ;
         \draw[dotted] (g) -- (h) ;
\end{tikzpicture}
\caption{The Dynkin diagram of $\rE_k$ for $k=6, 7, 8$. Simple roots are labelled as in CHEVIE.}
\label{tab:E6}
\end{center}
\end{figure}
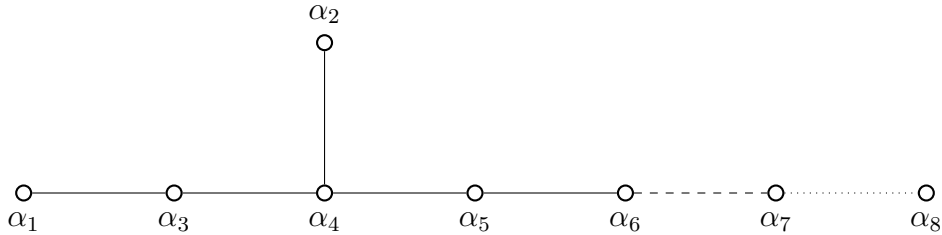

We are left with the exceptional groups of types 
$\rE_6$, $\rE_7$ and $\rE_8$. Let us first consider $p=2$. 
We define 
$$\cN_k:= \{\alpha \in \Phi^+ \mid \alpha_1 \le \alpha\} \cup \bigcup_{i=6}^k\{\alpha \in \Phi^+ \mid \alpha_k \le \alpha\},$$
for $k=6, 7, 8$. If $\Phi^+$ is a root system in type $\rE_k$, then 
$\cN_k \teq \Phi^+$, and $U(\rE_k(q))/X_{\cN_k} \cong U(\rD_4(q))$. We 
obtain a family $\cF_k$ of $4(q-1)^4$ characters of degree $q^3/2$ of $U(\rE_k(q))$ by inflation 
from $U(\rE_k(q))/X_{\cN_k}$, for $k=6, 7, 8$. 

We summarize in the following proposition the result obtained by the first and the second author in \cite{LM15} 
for $p=3$. 

\begin{prop}[\cite{LM15}, Section $3$]\label{prop:E6} Let $q=3^f$, and let $G(q)=\rE_6(q)$. There exist exactly 
$9(q-1)^6/2$ irreducible characters of $U(q)$ of degree $q^7/3$.
\end{prop}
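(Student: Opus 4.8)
The plan is to establish Proposition \ref{prop:E6} by the same inflation--induction strategy that worked in type $\rF_4$ for $p=3$, applied directly inside $U(\rE_6(q))$. First I would fix the enumeration of $\Phi^+(\rE_6)$ as in CHEVIE (so that $\alpha_1, \dots, \alpha_6$ are the simple roots and the higher-numbered roots have increasing height), and identify the relevant normal pattern: set $\cN$ to be the union of the sets $\{\alpha \in \Phi^+ \mid \alpha_i \le \alpha\}$ for the appropriate ``top'' simple roots, chosen so that $\cN \teq \Phi^+$ and $U(q)/X_\cN$ is the small subquotient on which the construction lives. One then works with $T:=U(q)/X_\cN$, computes its centre $Z=Z(T)$ (a product of a few root subgroups), and picks $\lambda = \lambda^{a_{17}, a_{18}, a_{19}, a_{20}, a_{21}, a_{8,9,10}^*} \in \Irr(Z)$ of the stated shape, where $a_{8,9,10}^*$ is subject to the squareness condition $a_{8,9,10}^*/(a_{something}) \in S$ exactly as in the $\rF_4$ and $\rG_2$, $p=3$ cases.

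The core of the argument is an application of Lemma \ref{lem:rl}. I would choose $X$ and $Y$ to be the two ``diagonal'' elementary abelian root subgroups of $T$ of equal order whose commutator lands in $Z$ --- concretely, $X$ generated by elements $x_{1,3,4,\dots}(t)$ that are products of root elements scaled by the $a_i$'s, and $Y$ the corresponding product on the partner roots, with a complement $\tilde Y$ inside $Y$ --- and verify the five hypotheses (i)--(v) of Lemma \ref{lem:rl} from the Chevalley commutator relations in $\rE_6$. The key computation is evaluating $\lambda([x, y])$ for $x \in X$, $y \in Y$: it should come out to $\phi(st \cdot (\text{linear in } t^2))$ or similar, so that the sets $X'$ and $Y'$ each have order $3$ (this is where $p = 3$ and the squareness hypothesis enter, producing the square root $e$ of $a_{8,9,10}^*/(a_{17}a_{18}\cdots)$ that defines the generators of $W_1$ and $W_2$). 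Then $[U(q) : HX'] = q^7/3$, the subquotient $V(q) := W_1 W_2 \cdots Z / \ker\lambda$ is abelian, and Lemma \ref{lem:rl} gives a bijection $\Irr(V(q) \mid \lambda) \to \Irr(U(q) \mid \lambda)$; since $V(q)$ is abelian with $|V(q)/(Z\ker\lambda)| = 9$, each $\lambda$ contributes exactly $9$ irreducible characters of degree $q^7/3$, labelled by $(e_{2,1,3,5,6,7,11}, e_4) \in \F_3^2$. Multiplying by the number $(q-1)^6/2$ of admissible parameter tuples (the $(q-1)^6$ choices of $a_{17}, \dots, a_{21}, a_{8,9,10}^*$ cut down by the factor of $2$ from the squareness constraint on the last one) yields $9(q-1)^6/2$ characters, and their mutual distinctness follows because the underlying linear characters of $V(q)$ are pairwise non-conjugate in $U(q)$, as the $\Irr(U(q)\mid\lambda)$ are disjoint for distinct $\lambda$.

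Finally, to get \emph{exactly} this count I would appeal to \cite{LM15}: the complete parametrization of $\Irr(U(\rE_6(q)))$ obtained there shows that no character outside this family has degree $q^n/3$ for any $n$. The main obstacle is purely computational bookkeeping: correctly identifying $\cN$, the centre $Z(T)$, and the diagonal subgroups $X, Y$ in the $36$-root system $\rE_6$, and checking that the commutator pairing $\lambda([x,y])$ has exactly the right nondegeneracy so that $|X'| = |Y'| = 3$ --- an off-by-a-factor-of-$p$ error there would change the degree from $q^7/3$ to $q^7$ or $q^7/9$. Once the combinatorics of the root system are pinned down, the verification of the Lemma \ref{lem:rl} hypotheses and the index computation are routine, and the final counting and uniqueness are immediate from the cited work.
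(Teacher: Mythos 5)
Your overall plan---pass to $T=U(q)/X_{\cN}$, reach an abelian subquotient $V(q)$ via Lemma \ref{lem:rl}, get $9$ extensions of each admissible central character, multiply by $(q-1)^6/2$, and quote prior work for exactness---is the same inflation--induction strategy the paper follows (after \cite{LM15}). However, the step you dismiss as ``purely computational bookkeeping'' is the actual content, and your template for it would not go through as described. In $T$ (with $\cN=\{\alpha_{22},\dots,\alpha_{36}\}$) the centre is $X_{\{17,\dots,21\}}$, of order $q^5$, and for the obvious choices of a transversal $X$ (spanned by the low-height root subgroups) and a complement $Y\le H$ made of root subgroups, the commutators land in the height-three subgroup $X_{\{12,\dots,16\}}$, not in $Z(T)$, so hypothesis (v) of Lemma \ref{lem:rl} fails; a single application with a $\rG_2$-style pairing $\phi\bigl(st(a+bt^2)\bigr)$ is not available. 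The construction that works uses the diagonal subgroup $X_{8,9,10}$ and the order-$3$ subgroups $F_2$ and $F_4$, where $F_2$ is a diagonal across seven root subgroups with quadratic corrections $2e^2$ and $F_4$ involves $1/e^3$, and the induction is from $HX_4F_2$ of index $q^7/3$; producing these data and verifying irreducibility is the detailed $\rE_6$ commutator analysis of \cite{LM15}, and your sketch gives no argument that the radicals have order exactly $3$ rather than $1$ or $9$---which, as you yourself note, is exactly where the degree could come out wrong. Note also that the paper does not carry general $a_{17},\dots,a_{21}$ through the reduction: it fixes them equal to $1$, obtains $9(q-1)/2$ characters, and then uses the transitive action of a split maximal torus on $\Irr(X_{17})^\times\times\cdots\times\Irr(X_{21})^\times$ to produce the full family of $9(q-1)^6/2$; your plan to thread five extra parameters through the pairing computation is possible in principle but much heavier, and the admissibility condition in the paper is simply that $a_{8,9,10}^*$ be a square, not a ratio condition as you guess.

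There is a second gap in the ``exactly'' part. The complete parametrization you invoke is not in \cite{LM15}: that paper provides the construction of the family, while the statement that no further irreducible character of $U(\rE_6(3^f))$ has degree $q^n/3$ is taken from \cite{LMP17$^+$}. As written, your proposal proves (modulo the construction above) only the lower bound ``at least $9(q-1)^6/2$''; the upper bound needs the citation to \cite{LMP17$^+$} or an independent argument.
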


\noindent
\emph{Construction.} Let $\cN=\{\alpha_{22}, \dots, \alpha_{36}\}$. Notice that 
$X_{\cN} \teq U(q)$ and $Z:=Z(U(q)/X_{\cN}) \cong X_{\{17, \dots, 21\}}$. 
For every $t, r, s \in \F_q$ we let 
$$x_{8, 9, 10}(t):=x_8(-t)x_9(t)x_{10}(t),$$ 
$$x_{1, 2, 3, 5, 6, 7, 11}(t, r, s):=
x_2(t)x_1(t)x_3(-t)x_5(t)x_6(-t)x_7(r)x_{11}(s),$$
and for every square $a_{8, 9, 10}^*$ in $\F_q^\times$ and a fixed a square root $e$ of $a_{8, 9, 10}^*$, let 
$$X_{8, 9, 10}:=\{x_{8, 9, 10}(t) \mid t \in \F_q\}$$ 
and 
$$F_2:=\{1, x_{2, 1, 3, 5, 6, 7, 11}(\pm e, 2e^2, 2e^2)\}, \qquad
F_4:=\{1, x_4(\pm 1/e^3)\}.$$

Let $V(q):=ZX_{8, 9, 10}F_2F_4$.  For $e_1=e_{1, 2, 3, 5, 6, 7, 11}$ and 
$e_2=e_4$ in $\F_3$, we denote by $\lambda_{e_1, e_2}^{a_{8, 9, 10}^*} \in \Irr(V(q))$ the character such that 
$$\lambda_{e_1, e_2}^{a_{8, 9, 10}^*}(x_{8, 9, 10}(t))=\phi(a_{8, 9, 10}^*t), \qquad 
\lambda_{e_1, e_2}^{a_{8, 9, 10}^*}|_{F_2}=\mu_{F_2}^{e_1}, \qquad 
\lambda_{e_1, e_2}^{a_{8, 9, 10}^*}|_{F_4}=\mu_{F_4}^{e_2},$$
and $\lambda(x_i(t))=\phi(t)$ for $i=17, \dots, 21$, and we let $H=ZX_{\{8, 9, 10\}} X_{\{12,  \dots 21\}}X_{\cN}$. Then 
we have that 
$$\cF:=\{\chi_{e_1, e_2}^{a_{8, 9, 10}^*} \mid  e_1, e_2 \in \F_3 \text{ and } a_{8, 9, 10}^* \text{ is a square in }\F_q^\times\},$$
with
$$\chi_{e_1, e_2}^{a_{8, 9, 10}^*}:=\Ind_{HX_4F_2}^{U(q)} \Inf_{V(q)}^{HX_4F_2}(\lambda_{e_1, e_2}^{a_{8, 9, 10}^*}
)$$
is a family of $9(q-1)/2$ irreducible 
characters of degree $q^7/3$. 

A split maximal torus of $G(q)$ acts transitively on $\Irr(X_{17})^\times \times \cdots \times \Irr(X_{21})^\times$; 
here $\Irr(X_i)^\times$ denotes $\Irr(X_i)\setminus\{1_{X_i}\}$. 
In particular, if $\lambda_{e_1, e_2}^{a_{17}, \dots, a_{21}, a_{8, 9, 10}^*} \in \Irr(V(q))$ is defined such that 
$$\lambda_{e_1, e_2}^{a_{17}, \dots, a_{21}, a_{8, 9, 10}^*}|_{X_{8, 9, 10}F_2F_4}
=\lambda_{e_1, e_2}^{a_{8, 9, 10}^*}|_{X_{8, 9, 10}F_2F_4}$$
and $\lambda(x_i(t))=\phi(a_it)$ for 
$i=17, \dots, 21$, then we obtain the family
$$\cF'=\{\chi_{e_1, e_2}^{a_{17}, \dots, a_{21}, a_{8, 9, 10}^*} 
\mid e_1, e_2 \in \F_3, a_{17}, \dots, a_{21} \in \F_q^\times, a_{8, 9, 10}^* \text{ is a square in }\F_q^\times\},$$ 
where 
$$\chi_{e_1, e_2}^{a_{17}, \dots, a_{21},a_{8, 9, 10}^*}:=\Ind_{HX_4F_2}^{U(q)} \Inf_{V(q)}^{HX_4F_2}(\lambda_{e_1, e_2}^{a_{17}, \dots, a_{21},a_{8, 9, 10}^*}
),$$
which consists of $9(q-1)^6/2$ elements of $\Irr(U(q))$ of degree $q^7/3$.

Finally, by \cite{LMP17$^+$} the family $\cF'$ consists of all irreducible characters of $U(q)$ 
of degree $q^n/3$ for some $n \ge 0$. \hfill\ensuremath{\square}

\vspace{2mm}

The construction in Proposition \ref{prop:E6} allows us 
to produce characters of degree $q^7/3$ also in $U(\rE_7(3^f))$ and 
$U(\rE_8(3^f))$. Let $k \in \{7, 8\}$, and let 
$$\cN_k:=\bigcup_{i=7}^k \{\alpha \in \Phi^+ \mid \alpha_k \le \alpha\}.$$
Then $\cN_k \teq \Phi^+$, and we inflate the family of $9(q-1)^6/2$ 
irreducible characters of $U(\rE_k(q))/X_{\cN_k} \cong U(\rE_6(q))$ obtained in Proposition \ref{prop:E6} 
to $U(\rE_k(q))$.

We finish with a construction of irreducible characters of 
degree $q^{16}/5$ in $U(\rE_8(5^f))$. 

\begin{prop}[\cite{LM15}, Section $4$] Let $q=5^f$, and 
let $G(q)=\rE_8(q)$. Then there exist at least $25(q-1)^8/4$ irreducible 
characters of $U(q)$ of degree $q^{16}/5$. 
\end{prop}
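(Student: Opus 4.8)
The plan is to follow the same inflation-induction template used in Propositions \ref{prop:B2}, \ref{prop:D4char} and \ref{prop:E6}, now applied to $G(q)=\rE_8(5^f)$ with the bad prime $p=5$. The combinatorial input I would need first is the location in $\Phi^+(\rE_8)$ of a root $\gamma$ (the highest root, or a suitable root) whose coefficient vector in the simple roots contains the entry $5$; this is exactly the reason $5$ is bad for $\rE_8$, and it is this coefficient $5$ that will produce the denominator. Concretely, I would look for a normal subset $\cN \teq \Phi^+$ of the form $\cN=\{\alpha_j \mid j \ge j_0\}$ (the roots with high enough CHEVIE index) so that the quotient $U(q)/X_{\cN}$ has a center $Z$ spanned by a block of root subgroups $X_{12}X_{13}$ together with $X_{37}, \dots, X_{43}$, matching the label $\chi_{f_\bullet, f_5}^{a_{37}, \dots, a_{43}, a_{12,13}^*}$ in Table \ref{tab:badchar}. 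The character to be induced is then a linear character $\lambda$ of an abelian subquotient $V(q)$ whose values on the relevant root elements are $\phi(a_i t)$.

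Next I would set up Lemma \ref{lem:rl} in this situation. Let $Z$ be the (part of the) center as above, pick $\lambda=\lambda^{a_{37},\dots,a_{43},a_{12,13}^*}\in\Irr(Z)$, and choose $X$ and $Y$ to be elementary abelian root subgroups of equal order with $[X,Y]\subseteq Z$ and $Y\teq H$, using the $\rE_8$ commutator relations. The key polynomial identity is of the form $\lambda([x,y])=\phi(P(s,t))$ where $P$ is a polynomial in one parameter $t$ along $Y$ and one parameter $s$ along $X$, and whose coefficients are the $a_i$'s together with $a_{12,13}^*$. Because $p=5$ divides the relevant structure constant, $P(s,t)$ as a polynomial in $t$ has degree making $Y'=\{y\in Y\mid \lambda([x,y])=1\ \forall x\}$ a subgroup of order $5$ (equivalently $p$), precisely when $a_{12,13}^*$ lies in an appropriate coset of $(\F_q^\times)^{?}$ — this is the analogue of ``$-a_3/a_5$ a square'' in the $\rG_2$, $p=3$ case and of ``$a_6^*/a_{11}a_{12}a_{13}$ a square'' in the $\rF_4$ case. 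For $p=5$ the condition will be that $a_{12,13}^*$ is a suitable power (a fifth-power-related coset) up to the other parameters; this accounts for the factor $1/4$ and the $(q-1)^8$ in the count $25(q-1)^8/4$, with the $25=5^2$ coming from the two $\F_5$-parameters $f_\bullet$ and $f_5$. Then $[U(q):HX']=q/5$, $HX'/\tilde Y\ker\lambda\cong V(q):=ZX'Y'$ is abelian, and Lemma \ref{lem:rl} yields for each admissible tuple a bijection $\Irr(V(q)\mid\lambda)\to\Irr(U(q)\mid\lambda)$; since $V(q)$ is abelian and $[V(q):Z\ker\lambda]=p^2$, there are $p^2$ characters over each $\lambda$, each of degree $[U(q):HX']=q/5$. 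Taking all admissible $(a_{37},\dots,a_{43},a_{12,13}^*)$ and all $(f_\bullet,f_5)\in\F_5^2$ gives the family $\cF'$ of $25(q-1)^8/4$ characters of degree $q^{16}/5$, the exponent $16$ being $|\Phi^+(\rE_8)\setminus(\cN\cup\{\text{support of }V(q)\})|$ up to the $q/p$ twist.

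The steps, in order: (1) fix the CHEVIE enumeration of $\Phi^+(\rE_8)$ and identify $\cN$ so that $Z(U(q)/X_{\cN})$ is the span of $X_{12}X_{13}$ and $X_{37},\dots,X_{43}$; (2) choose $X$, $Y$, $H$ and verify hypotheses (i)--(v) of Lemma \ref{lem:rl} from the commutator relations; (3) compute $\lambda([x,y])$ explicitly, read off $X'$ and $Y'$, and determine the arithmetic condition on $a_{12,13}^*$ making $|Y'|=5$; (4) check $V(q):=ZX'Y'$ is abelian and $HX'/\tilde Y\ker\lambda\cong V(q)$; (5) apply Lemma \ref{lem:rl}, describe $\Irr(V(q)\mid\lambda)=\{\lambda\otimes\mu_{X'}^{f_\bullet}\otimes\mu_{Y'}^{f_5}\}$, and count; (6) use transitivity of a split maximal torus on $\Irr(X_{37})^\times\times\cdots\times\Irr(X_{43})^\times$ to pass from fixed $a_i=1$ to arbitrary $a_{37},\dots,a_{43}\in\F_q^\times$, exactly as in Proposition \ref{prop:E6}. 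The main obstacle is step (3): verifying that in the $\rE_8$ root system there genuinely is a configuration where the relevant Chevalley structure constant is divisible by $5$ (not merely by $2$ or $3$), and that the resulting polynomial $P(s,t)$ in $t$ degenerates to degree exactly $5$ rather than something larger — one must be careful with the sign conventions and normalizations of the structure constants in characteristic $5$, since these are precisely the subtle ``bad prime'' phenomena. Because the statement only claims the existence of \emph{at least} $25(q-1)^8/4$ such characters (and not that these exhaust the non-$q$-power degrees, which is deferred to Conjecture \ref{conj:1}), I do not need the global counting check \eqref{eq:check}; it suffices to produce the family and verify the characters within it are distinct, which follows from the disjointness of the fibers $\Irr(U(q)\mid\lambda)$ over distinct admissible parameter tuples.
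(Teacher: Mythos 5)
Your outline does follow the paper's template (pass to $U(q)/X_{\cN}$, inflate--induce a linear character of an abelian subquotient $V(q)$, then use the transitive action of a split maximal torus on $\Irr(X_{37})^\times\times\cdots\times\Irr(X_{43})^\times$ to multiply a base count of $25(q-1)/4$ by $(q-1)^7$ -- your step (6) and the final bookkeeping are exactly what the paper does, with $\cN=\{\alpha_{44},\dots,\alpha_{120}\}$). But the entire content of this proposition is the explicit configuration, and that is where your plan has a genuine gap rather than a deferred verification. First, the relevant one-parameter subgroups are \emph{not} root subgroups, and no choice of $X$, $Y$ among root subgroups as in your step (2) produces the required degeneration: in the actual construction the order-$5$ group playing the role of $X'$ is $F_4=\{l_3(eu)x_{23}(3e^4u^4)\mid u\in\F_5\}$, where $l_3(u)=l_2(u)x_{16}(4u^3)x_{17}(2u^3)x_{22}(3u^3)$ and $l_2$, $l_1$ already spread over the root subgroups indexed by $1,\dots,4,6,\dots,11,14,\dots,17,22,23$ with correction terms of degree up to $4$ in the parameter; likewise the starred parameter lives on the anti-diagonal $X_{12,13}=\{x_{12}(t)x_{13}(-t)\}$, which is \emph{not} part of $Z(U(q)/X_{\cN})\cong X_{\{37,\dots,43\}}$ as you assert, but an extra piece of $V(q)=ZX_{12,13}F_4F_5$. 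Finding these elements is precisely the work of \cite{LM15}, Section 4, and is what your step (3) flags as ``the main obstacle'' without resolving it.

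Second, your arithmetic condition is wrong in a way that matters: the restriction is that $a_{37,\dots,43}^*$ (equivalently $a_{12,13}^*$ in the notation of Table \ref{tab:badchar}) be a \emph{fourth} power, $a^*=e^4$, which is what yields the $(q-1)/4$ admissible values and hence the factor $1/4$; a ``fifth-power-related'' condition is vacuous, since $t\mapsto t^5$ is a bijection of $\F_q$ for $q=5^f$. The fourth-power hypothesis is forced by the scalings $e$, $e^4$, $e^5$ occurring in $F_4$ and in $F_5=\{x_5(v/e^5)\mid v\in\F_5\}$, in analogy with the square conditions in the $\rG_2$ and $\rF_4$, $p=3$ cases. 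There is also a slip in your degree computation: the relevant index is $[U(q):HX_5F_4]=q^{16}/5$ with $H=ZX_{\{12,13\}}X_{\{18,\dots,21\}}X_{\{24,\dots,36\}}X_{\cN}$ (so $|HX_5F_4|=5q^{104}$ inside $|U(q)|=q^{120}$), not $q/5$, although you recover the correct degree heuristically at the end. In short: right strategy and right count, but the proposal defers exactly the construction that constitutes the proof, and the specific guesses you make about that construction (root subgroups, fifth-power condition) would fail if pursued literally.
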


\noindent 
\emph{Construction.} The set $\cN:=\{\alpha_{44}, \dots, \alpha_{120}\}$ is a normal subset 
of $\Phi^+$, and $Z:=U(q)/X_{\cN} \cong X_{\{37, \dots 43\}}$. Fix $a_{37, \dots, 43}^* \in \F_q^\times$ such that 
$a_{37, \dots, 43}^*=e^4$ for 
some $e \in \F_q^\times$. Observe that such an element $a_{37, \dots, 43}^* \in \F_q^\times$ can take $(q-1)/4$ distinct values in $\F_q^\times$. 
For every $u_1, u_2, u_3 \in \F_q$, we let 
\begin{align*}
l_1(u_1)&:=x_1(u_1)x_2(2u_1)x_3(-2u_1)x_4(u_1)x_6(u_1)x_7(2u_1)x_8(-2u_1),\\
l_2(u_2)&:=l_1(u_2)x_9(u_2^2)x_{10}(-u_2^2)x_{11}(u_2^2)
x_{14}(-u_2^2)x_{15}(2u_2^2), \\
l_3(u_3)&:=l_2(u_3)x_{16}(4u_3^3)x_{17}(2u_3^3)x_{22}(3u_3^3),
\end{align*}
and we define 
$X_{12, 13}:=\{x_{12}(t)x_{13}(-t) \mid t \in \F_q\}$ 
and
$$F_4:=\{l_3(eu)x_{23}(3e^4u^4) \mid u \in \F_5\},
\qquad 
F_5:=\{x_5(v/e^5) \mid e \in \F_5\}.$$
We put $V(q):=ZX_{12, 13}F_4F_5$, and for $f_1=f_{1 \dots, 4, 6, \dots, 11, 14, \dots, 17, 22, 23}$ and $f_2=f_5$ in $\F_5$, 
we denote by 
$\lambda=\lambda_{f_1, f_2}^{a_{37, \dots, 43}^*}$ the irreducible character of $V(q)$ such that 
$$\lambda(x_{12}(t)x_{13}(-t))=\phi(a_{37, \dots, 43}^*t), 
\qquad \lambda|_{F_4}=\mu_{F_4}^{f_1}, \qquad \lambda|_{F_5}=\mu_{F_5}^{f_2},$$
and $\lambda(x_i(t))=\phi(t)$ for $i=37, \dots, 43$. 

Let $H:=ZX_{\{12, 13\}}X_{\{18, \dots, 21\}}X_{\{24, \dots, 36\}}X_{\cN}$. Then we 
have a family 
$$\cF:=\{\chi_{f_1, f_2}^{a_{37, \dots, 43}^*} \mid 
f_1, f_2 \in \F_5 \text{ and } a_{37, \dots, 43}^* \text{ is a fourth power in }\F_q^\times\},$$
where
$$\chi_{f_1, f_2}^{a_{37, \dots, 43}^*}:=\Ind_{HX_5F_4}^{U(q)}\Ind_{V(q)}^{HX_5F_4} \lambda_{f_1, f_2}^{a_{37, \dots, 43}^*},$$
of $25(q-1)/4$ irreducible characters of degree 
$q^{16}/5$. 

In a similar way of Proposition \ref{prop:E6}, we observe that a split maximal 
torus of $G(q)$ acts transitively on $\Irr(X_{37})^\times \times \cdots \times \Irr(X_{43})^\times$. 
This gives $(q-1)^7 \cdot 25(q-1)/4=25(q-1)^8/4$ irreducible characters of $U(q)$ of 
degree $q^{16}/5$. \hfill\ensuremath{\square}

\vspace{2mm}

\end{document}